\documentclass[12pt]{amsart}


\usepackage{amsfonts,amsmath,latexsym,amssymb,verbatim,amsbsy,amsthm}



\usepackage[top=1in, bottom=1in, left=1in, right=1in]{geometry}

\usepackage[dvipsnames]{xcolor}

\usepackage[colorlinks=true, pdfstartview=FitV, linkcolor=RoyalBlue,citecolor=ForestGreen, urlcolor=blue]{hyperref}

\numberwithin{equation}{section}


\renewcommand{\geq}{\geqslant}

\renewcommand{\leq}{\leqslant}

\theoremstyle{plain}
\newtheorem{THEOREM}{Theorem}[section]

\newtheorem{theorem}[THEOREM]{Theorem}

\newtheorem{lemma}[THEOREM]{Lemma}

\newtheorem{assumption}[THEOREM]{Assumption}

\theoremstyle{definition}

\theoremstyle{remark}

\newtheorem{remark}[THEOREM]{Remark}

\newcommand{\myr}[1]{{\color{red}{#1}}} 



\def \a {\alpha} 

\def \d {\delta}

\def \g {\gamma}

\def \e {\varepsilon}
\def \f {\varphi}
\def \F {\Phi}

\def \L {\Lambda}
\def \n {\nabla}

\def \r {\rho}
\def \th {\theta}

\def \CTm {\eta_-}
\def \CTp {\eta_+}
\def\bu{{\mathbf u}}


\def \bk {{\bf k}}
\def \bx {{\mathbf x}}
\def \by {{\mathbf y}}
\def \bu {{\mathbf u}}
\def \bv {{\mathbf v}}
\def \bz {{\mathbf z}}


\def \M {M_0} 

\def \Lal {\L_\alpha}

\def \aL {{\mathcal L}}
\def \aT {{\mathcal T}}


\newcommand{\Z}{\ensuremath{\mathbb{Z}}}   
\newcommand{\R}{\ensuremath{\mathbb{R}}}   
\newcommand{\T}{\ensuremath{\mathbb{T}}}   



\def \p {\partial}

\def \ss {\subset}


\newcommand{\upp}[1]{{#1}_+}
\newcommand{\low}[1]{{#1}_-}

 %
 %
 %
 %
 %
 %
 %
 %
 %
 %
 %


\def \orho {\overline{\rho}}
\def \hf {\frac{1}{2}}

\def \urho  {\rho_-}
\def \orho {\rho_+}
\def \phinf {|\phi|_\infty}

\begin{document}

\title{Eulerian dynamics with a commutator forcing}

\author{Roman Shvydkoy}
\address{Department of Mathematics, Statistics, and Computer Science, M/C 249,\\
University of Illinois, Chicago, IL 60607, USA}
\email{shvydkoy@uic.edu}

\author{Eitan Tadmor}
\address{Department of Mathematics, Center for Scientific Computation and Mathematical Modeling (CSCAMM), and Institute for Physical Sciences \& Technology (IPST), University of Maryland, College Park\newline
Current address: Institute for Theoretical Studies (ITS), 
ETH, Clausiusstrasse 47, CH-8092 Zurich, Switzerland}
\email{tadmor@cscamm.umd.edu}

\date{\today}

\subjclass{92D25, 35Q35, 76N10}

\keywords{flocking, alignment, fractional dissipation, Navier-Stokes, critical thresholds.}

\thanks{\textbf{Acknowledgment.} Research was supported in part by NSF grants DMS16-13911, RNMS11-07444 (KI-Net) and ONR grant N00014-1512094 (ET) and by NSF grant DMS 1515705 (RS). RS thanks CSCAMM for the hospitality for his March 2016 visit which initiated this work. Finally we thank the Institute for Theoretical Studies (ITS) at ETH-Zurich for the hospitality.}

\begin{abstract}
We study a general class of Euler equations driven by a forcing with a \emph{commutator structure} of the form $[\aL,\bu](\rho)=\aL(\rho \bu)- \aL(\rho)\bu$, where $\bu$ is the velocity field and $\aL$ is the   ``action'' which belongs to a rather general class of translation invariant operators. Such systems arise, for example,   as the hydrodynamic description of velocity alignment, where  action involves convolutions with bounded, positive influence kernels, $\aL_\phi(f)=\phi*f$. 
Our interest lies with a much larger class of $\aL$'s which are neither bounded nor positive.

In this paper we develop a global regularity theory in the one-dimensional setting, considering three prototypical sub-classes of actions. We  prove  global regularity for \emph{bounded} $\phi$'s which otherwise are allowed to change sign. Here we derive sharp critical thresholds such that sub-critical initial data $(\rho_0,u_0)$ give rise to global smooth solutions. Next, we study \emph{singular} actions associated with  $\aL=-(-\partial_{xx})^{\alpha/2}$, which embed the fractional Burgers' equation of order $\alpha$. 
We prove global regularity for $\alpha\in [1,2)$. Interestingly, the singularity of the fractional kernel $|x|^{-(n+\alpha)}$, avoids an initial threshold restriction. Global regularity of the critical endpoint $\alpha=1$ follows with double-exponential $W^{1,\infty}$-bounds. Finally, for the other endpoint $\alpha=2$, we prove the global regularity of the Navier-Stokes equations with density-dependent viscosity associated with the \emph{local} $\aL=\Delta$.
\end{abstract}

\maketitle
\setcounter{tocdepth}{1}
\tableofcontents

\section{Fundamentals. Euler equations with a commutator structure}
We are concerned with  a new class of Eulerian dynamics where a  velocity field, $\bu : \Omega\times \R_+ \mapsto  \R^n$, is driven  by the system
\begin{equation}\label{e:main}
\left\{
\begin{split}
\rho_t + \n \cdot (\rho \bu) & = 0, \\
\bu_t + \bu \cdot \n \bu &= \aT(\rho, \bu),
\end{split} \right. \qquad (\bx,t)\in \Omega\times \R_+.
\end{equation}
The main feature here is the commutator structure of  the forcing
\begin{equation}\label{eq:L}
\aT(\rho \bu) = [\aL, \bu](\rho):=\aL(\rho\bu)-\aL(\rho)\bu,
\end{equation}
expressed in terms of a self-adjoint operator  $\aL: \R \mapsto \R$ (the action on $\rho\bu$ is interpreted component-wise). We focus on the Cauchy problem over the whole space $\Omega=\R^n$ or over the torus $\Omega={\mathbb T}^n$.

A typical example  is provided by radial mollifiers, $\aL(f)=\phi*f$, associated with  integrable $\phi \in L^1$, which yields the commutator forcing
\begin{equation}\label{eq:phi}
\aT(\rho, \bu)(\bx) = \phi*(\rho\bu)- (\phi*\rho)\bu = \int_{\R^n}\phi(|\bx-\by|)  (\bu(\by) - \bu(\bx) )\rho(\by)d\by.
\end{equation}
The corresponding system \eqref{e:main},\eqref{eq:phi} arises as 
macroscopic realization of the Cucker-Smale agent-based dynamics \cite{CS2007a,CS2007b}, which describes the collective motion of $N$ agents, each of which adjusts its velocity to a weighted average of  velocities of its neighbors dictated by an \emph{influence function} $\phi$, 
\begin{align*}
\left\{\begin{array}{ll}\dot{\bx}_i&=\bv_i, \\
\dot{\bv}_i&=\displaystyle \frac{1}{N}\sum_{j=1}^N \phi(|\bx_i-\bx_j|)(\bv_j-\bv_i),
\end{array}\right.\qquad (\bx_i,\bv_i)\in \R^n\times \R^n.
\end{align*}
For large crowds, $N\gg 1$, one is led to the hydrodynamic description \eqref{e:main},\eqref{eq:phi}, \cite {HT2008,CCP2017}. For recent results which justify the passage  to Cucker-Smale kinetic and hydrodynamic descriptions with \emph{weakly singular} kernels $\phi$ (of order $< \hf$) we refer to \cite{Pes2015,PS2016}.
The global regularity of  such one- and two-dimensional systems  \eqref{e:main},\eqref{eq:phi} was studied in \cite{TT2014, CCTT2016,HT2016}.
For  \emph{bounded, positive} mollifiers it was shown that there exist  certain \emph{critical thresholds} in the phase space of initial configurations, $(\rho_0 >0,\bu_0)$, such that sub-critical initial data  propagate the initial smoothness of $(\rho(\cdot,0), \bu(\cdot,0) )=(\rho_0,\bu_0)$ globally in time.

Our interest lies in the global regularity of \eqref{e:main},\eqref{eq:L} for a much larger class of $\aL$'s which are neither positive nor bounded. We have three typical examples in mind.

\subsection{Examples}
Consider $\aL=\aL_\phi$ of the form 
\begin{equation}\label{eq:Lphi}
\aL_\phi(f)(\bx):= \int_{\R^n} \phi(|\bx-\by|) \big(f(\by)-f(\bx)\big)d\by.
\end{equation}
Our first example involves \emph{bounded} kernels with a finite positive mass,  denoted $\phi\in  L^\infty_\#:=\{ \phi \in L^\infty  \ | \ 0 <\int \phi(r)dr <\infty \}$, but otherwise are allowed to \emph{change sign}.
The resulting commutator $\aT_\phi=[\aL_\phi,\bu](\rho)$ coincides with the usual convolution action  in \eqref{eq:phi},
\begin{equation}\label{eq:Tphi}
\aT(\rho,\bu)(\bx)=[\aL_\phi,\bu](\rho)(\bx) = \int_{\R^n} \phi(|\bx-\by|)(\bu(\by)-\bu(\bx))\rho(\by)d\by, \qquad \phi\in  L^\infty_\#.
\end{equation}

The action $\aL_\phi$ in \eqref{eq:Lphi} and its commutator forcing  \eqref{eq:Tphi} are well defined for \emph{non-integrable} $\phi$'s as well. As a second example we consider, $\phi_\alpha(\bx):=|\bx|^{-(n+\alpha)}$, associated with the action of the fractional Laplacian\footnote{We shall abuse  notations by abbreviating $\aL_{\phi_\alpha}:=\aL_\alpha$ since the distinction is clear from the context of the sub-index involved.}  $\aL_{\alpha}(f)=-\L_{\alpha}(f), \ \alpha<2$,
\[
\L_{\alpha}(f)(\bx)= p.v. \int_{\R^n} \frac{f(\bx)-f(\by)}{|\bx-\by|^{n+\alpha}}d\by, \qquad \L_{\alpha}=(-\Delta)^{\alpha/2}, \quad 0<\alpha <2.
\]
The corresponding forcing is then given by the singular integral
\begin{equation}\label{e:T}
\aT(\rho, \bu)(x) = -\L_{\alpha} (\rho \bu)+ \L_{\alpha}(\rho) \bu= p.v. \int_{\R^n} \frac{  \bu(\by) - \bu(\bx) }{|\bx-\by|^{n+\alpha}}\rho(\by) d\by.
\end{equation}
The operator $\aT$ in \eqref{e:T} is well-defined as a distribution over the whole space $\Omega=\R^n$. When dealing with the torus $\Omega=\T^n$, the forcing $\aT$  can be expressed in terms of the periodized kernel $\phi_\alpha(\bz) = \sum_{\bk\in \Z^n} \frac{1}{|\bz+2\pi \bk|^{n+\alpha}}$. 
 
\ifx
\myr{Should  add here a note(?) observing that the  formal limit $\rho \rightarrow 1$ yields}\footnote{... the incompressible Navier-Stokes equations ($n\geq2$) and the corresponding Burgers equation ($n=1$) with fractional-order diffusion, e.g., \myr{Need references on fractional NS} \cite{BT2014,IJS2016}.}.
\fi

Finally, as a third example we consider  the full Laplacian $\aL=\Delta$ corresponding to the limiting case $\alpha=2$ with forcing $\aT (\rho, \bu)=\rho\Delta \bu + 2(\nabla \rho\cdot \nabla) \bu$. This leads to the density-dependent system of pressureless Navier-Stokes equations
\begin{equation}\label{e:NSs}
\left\{
\begin{split}
\rho_t + \n \cdot (\rho \bu) & = 0,\\
(\rho \bu)_t + \n (\rho \bu \otimes \bu)  &= \n (\rho^2 D\bu), \qquad D\bu=\{\partial_i u_j\}.
\end{split}\right.
\end{equation}

\smallskip\noindent
We close by noting that these equations are typically  come ``equipped'' with certain standard global bounds.  Thus, in addition to the obvious conservation of mass,
\[
M_0:=\int \rho_0(\bx)d\bx\equiv \int \rho(\bx,t)d\bx,
\]
we have, since $\aL$ is assumed self-adjoint, $\int\big(\rho \aL(\rho \bu) - \aL(\rho)\rho \bu\big) d\bx=0$, conservation of momentum,
$\int_{\R^n } \rho \bu(\cdot,t) \,d\bx = \int_{\R^n } \rho_0 \bu_0 \, d\bx$.
Also for $\aL_\phi$ we have the  $\rho$-weighted energy-enstrophy bound
\begin{equation}\label{e:en}
\int_{\R^n\times \{T\} } \rho |\bu|^2 d\bx + \int_0^T \int_{\R^n\times\R^n} \rho(\bx) \rho(\by) \phi(|\bx-\by|)|\bu(\bx) - \bu(\by)|^2  d\bx d\by dt =  \int_{\R^n} \rho_0 |\bu_0|^2 d\bx.
\end{equation}

\subsection{The one dimensional case. Statement of main results.}\label{sec:1d}
The main focus of this paper  is one-dimensional case where \eqref{e:main},\eqref{eq:L} reads,
\begin{equation}\label{eq:1D}
\left\{
\begin{split}
\rho_t + (\rho u)_x & = 0,\\
(\rho u)_t + (\rho u^2)_x &= \rho \aL(\rho u)- \rho \aL(\rho)u,
\end{split}\right. \qquad (x,t)\in \Omega\times \R_+.
\end{equation}
We shall make a detailed study on the propagation of  regularity of \eqref{eq:1D} for sub-critical initial data, dictated by the properties of $\aL$.

We begin by recalling that \eqref{eq:1D} with $\aL=\aL_\phi$ amounts to the  one-dimensional  Cucker-Smale ``flocking hydrodynamics'' \cite{CS2007a,CS2007b,MT2014,CCP2017}
\begin{equation}\label{eq:1DCS}
\left\{
\begin{split}
\rho_t + (\rho u)_x & = 0,\\
 (\rho u)_t + (\rho u^2)_x &= \int_{\R} \phi(|x-y|) \big(u(y) - u(x) \big)\rho(x)\rho(y)dy,
\end{split}\right.\qquad (x,t)\in \Omega\times \R_+.
\end{equation}
  Global regularity  for bounded \emph{positive} $\phi$'s  persists if and only if the initial data are \emph{sub-critical} in the sense that  \cite{CCTT2016}
\begin{equation}\label{eq:1DCSCT}
u'_0(x)+\phi*\rho_0(x) \geq 0 \quad \text{for all} \ \ x\in \R.
\end{equation}
 
In section \ref{sec:bounded} we extend this regularity result for general bounded $\phi$'s whether positive or not. The results below are stated over the torus, $\Omega=\T^1$,  for the purely technical reason of  securing a \emph{uniform} lower  bound of the density away from vacuum, which in turn provides uniform parabolicity of the $u$-equation.  However, the local well-posedness follows from our analysis over $\Omega=\R$ line as well.

\begin{theorem}\label{thm:bdd}
Consider the hydrodynamics flocking model \eqref{eq:1DCS}  with a bounded mollifier, $\phi\in  L^\infty_\#$  having a positive total mass $I(\phi)=\int \phi(r)dr >0$, and subject to sub-critical initial data $(\rho_0,u_0)\in (L^1_+(\T^1), W^{1,\infty}(\T^1))$, such that 
\[
u'_0(x) +\phi*\rho_0(x) >0, \qquad  x\in \T^1.
\]
Then \eqref{eq:1DCS} admits global smooth solution. 
\end{theorem}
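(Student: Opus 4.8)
The plan is to run the one-dimensional ``critical threshold'' strategy for alignment hydrodynamics: reduce global regularity, via a continuation principle, to uniform-in-time control of $\|\rho(\cdot,t)\|_{L^\infty}$ and $\|u_x(\cdot,t)\|_{L^\infty}$ together with positivity of $\rho$, and then extract all three bounds from a single transported quantity. First I would rewrite the momentum equation of \eqref{eq:1DCS}, using the continuity equation, in non-conservative form $u_t+uu_x=\phi*(\rho u)-(\phi*\rho)u$, and observe that on $\T^1$ the forcing is bounded on every $C^k$ and $H^s$ (since $\phi\in L^\infty(\T^1)\subset L^1(\T^1)$, so $\|\phi*f\|_{H^s}\le 2\pi\|\phi\|_{L^\infty}\|f\|_{H^s}$). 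Hence local well-posedness for smooth data with $\min_{\T^1}\rho_0>0$ is standard, with the usual continuation criterion: a smooth solution persists as long as $\|\rho(\cdot,t)\|_{L^\infty}$ and $\|u_x(\cdot,t)\|_{L^\infty}$ stay bounded and $\rho$ stays bounded away from $0$ on $[0,T]$.

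The heart of the matter is the auxiliary quantity $e:=u_x+\phi*\rho$. Differentiating the $u$-equation in $x$ and using $(\phi*\rho)_t=-\big(\phi*(\rho u)\big)_x$, a short computation gives $e_t+(ue)_x=0$, i.e.\ $e$ obeys the same continuity equation as $\rho$; hence $w:=e/\rho$ is transported, $w_t+uw_x=0$. Because $\T^1$ is compact and the data are sub-critical, $0<c_-:=\min w_0\le\max w_0=:c_+<\infty$ with $w_0=(u_0'+\phi*\rho_0)/\rho_0$, so along every characteristic $c_-\rho\le e\le c_+\rho$ for as long as the solution lives — in particular $e>0$ throughout (consistently, $\int_{\T^1}e\,dx=I(\phi)M_0>0$ is conserved). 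From $e>0$ and $|\phi*\rho|\le\|\phi\|_{L^\infty}M_0$ I get $u_x=e-\phi*\rho\ge-\|\phi\|_{L^\infty}M_0$; feeding $e\ge c_-\rho$ into $\dot\rho=-\rho u_x=-\rho e+\rho\,\phi*\rho$ along characteristics yields the logistic inequality $\dot\rho\le-c_-\rho^2+\|\phi\|_{L^\infty}M_0\,\rho$, hence the uniform upper bound $\rho\le\rho_+:=\max\{\|\rho_0\|_{L^\infty},\ \|\phi\|_{L^\infty}M_0/c_-\}$; then $u_x=e-\phi*\rho\le c_+\rho_++\|\phi\|_{L^\infty}M_0=:U_+$ bounds $u_x$ above as well, and $\dot\rho\ge-U_+\rho$ keeps $\rho\ge(\min_{\T^1}\rho_0)\,e^{-U_+t}>0$ on finite intervals. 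This is exactly where the hypotheses bite: boundedness of $\phi$ enters through $\rho_+$, and positivity of the total mass $I(\phi)$ through $w_0>0$; sign changes of $\phi$ cost nothing beyond replacing $\phi$ by $\|\phi\|_{L^\infty}$ in these estimates.

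With these controls in hand, verifying the continuation criterion — and, more generally, propagating the full smoothness of the data — is the last step, and the only laborious one. The subtlety is the apparent loss of a derivative in the terms $-\rho u_x$ and $uu_x$; I would resolve it by propagating the pair $(\rho,e)$ rather than $(\rho,u)$ in $H^s$, $s>3/2$: since $u_x=e-\phi*\rho$ and $\phi*$ is bounded on $H^s$, control of $\|\rho\|_{H^s}+\|e\|_{H^s}$ (together with the elementary bound $\|u(\cdot,t)\|_{L^\infty}\le\|u_0\|_{L^\infty}e^{Ct}$) controls $\|u\|_{H^{s+1}}$ and closes the estimate; the conservative transport equations $\rho_t+(\rho u)_x=0$ and $e_t+(ue)_x=0$ then give, through tame product and commutator bounds and the uniform $L^\infty$-bounds on $u_x$ and $\rho$ from the previous step, a Gr\"onwall inequality for $\|\rho\|_{H^s}^2+\|e\|_{H^s}^2$, hence at most exponential-in-time growth and a global smooth solution. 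I expect the conceptual content to be entirely contained in the identity $e_t+(ue)_x=0$ and the logistic bound it produces; the $H^s$ bookkeeping in the final step is routine.
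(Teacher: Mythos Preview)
Your proposal is correct and follows essentially the same route as the paper: the transported quantity $e/\rho$ yields the logistic bound on $\rho$, then the uniform bound on $u_x$, then standard continuation. The only cosmetic difference is that you take $e=u_x+\phi*\rho$ whereas the paper uses $e=u_x+\aL_\phi(\rho)=u_x+\phi*\rho-I(\phi)\rho$; since both satisfy the same continuity equation (they differ by a multiple of $\rho$), your sub-critical condition $c_->0$ is exactly the paper's $\eta_->-I(\phi)$, and you actually spell out the $H^s$ closure that the paper leaves implicit for bounded mollifiers.
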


Next, we extend this result to the case of the positive \emph{singular} mollifiers $\phi_\alpha(r)=|r|^{-(1+\alpha)}$ 
\begin{equation}\label{e:FNS}
\left\{
\begin{split}
\rho_t + (\rho u)_x & = 0,\\
(\rho u)_t + (\rho u^2)_x &= p.v. \int_{\R} \frac{ u(y) - u(x) }{|x-y|^{1+\alpha}}\rho(x)\rho(y) dy, \quad \alpha < 2,
\end{split}\right.\qquad (x,t)\in \Omega\times \R_+.
\end{equation}
Here we follow a general iteration scheme for proving (higher) regularity outlined in section \ref{sec:scheme}, in which one seeks bounds on the density, $\rho$, and then bounds the ``action'' $\aL(\rho)$. The uniform bounds on the density for all three cases are worked out in section \ref{sec:bddensity}.
 We then turn to secure bounds on the action or --- what amounts to the same thing, uniform bound on $u_x$, which in turn yields global well-posedness.
  In section \ref{sec:boundedL} we discuss the global regularity for bounded mollifiers $\aL=\aL_\phi$ in \eqref{eq:1DCS}, and in section \ref{sec:NSE} for the the Navier-Stokes equations,
 $\aL=\Delta$, 

 \begin{equation}\label{e:NS}
\left\{
\begin{split}
\rho_t +  (\rho u)_x & = 0,\\
(\rho u)_t + (\rho u^2)_x &= (\rho^2 u_x)_x,
\end{split}
\right.\qquad (x,t)\in \Omega\times \R_+.
\end{equation}
This is the one-dimensional special case of the general class of Navier-Stokes equations studied in \cite{BDGV2007}.
\begin{theorem}\label{thm:ns}
Consider the Navier-Stokes equations \eqref{e:NS} subject to
 initial data  $(u_0,\rho_0)\in H^2(\T^1) \times H^3(\T^1)$.
Then \eqref{e:NS} admits a global solution in the same class. 
\end{theorem}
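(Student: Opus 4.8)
The goal is to show that for every $T>0$ the $H^2\times H^3$ solution produced by the local theory of \sect{sec:scheme} does not break down on $[0,T]$; by the continuation criterion it suffices to establish, a priori and on $\T^1\times[0,T]$, bounds for the three quantities $\|u_x\|_{L^\infty}$, $\|\rho\|_{L^\infty}$ and $\|1/\rho\|_{L^\infty}$, after which all higher norms propagate routinely. All constants below may depend on $T$ and on the data, and we use the (implicit) hypothesis $\rho_0>0$, so that $\rho_0\geq c_0>0$ on the torus. We will repeatedly use the non-conservative form of \eqref{e:NS}, namely $u_t+uu_x=\rho u_{xx}+2\rho_x u_x$ and $\rho_t+u\rho_x+\rho u_x=0$. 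The structural key is that the ``effective velocity''
\[
v:=u+\rho_x
\]
is \emph{transported} by $u$: differentiating the continuity equation in $x$ gives $\tfrac{D}{Dt}\rho_x=-2\rho_x u_x-\rho u_{xx}$ (with $\tfrac{D}{Dt}=\partial_t+u\partial_x$), which exactly cancels $\tfrac{D}{Dt}u=\rho u_{xx}+2\rho_x u_x$, so that $v_t+uv_x=0$. Hence $\|v(\cdot,t)\|_{L^\infty}=\|u_0+\rho_{0,x}\|_{L^\infty}=:C_1$ for all $t$; reading the $u$-equation at a spatial extremum of $u$ shows $\max_x u$ is non-increasing and $\min_x u$ non-decreasing, so $\|u(\cdot,t)\|_{L^\infty}\leq\|u_0\|_{L^\infty}$; therefore $\|\rho_x(\cdot,t)\|_{L^\infty}\leq C_1+\|u_0\|_{L^\infty}=:R_0$ and, since $\int_{\T^1}\rho\equiv M_0$, also $\|\rho(\cdot,t)\|_{L^\infty}\leq \tfrac{M_0}{2\pi}+\pi R_0=:M$ --- all of these bounds being uniform in time. (The $\rho$-weighted energy identity $\tfrac12\tfrac{d}{dt}\int\rho u^2+\int\rho^2 u_x^2=0$ also holds, but is not needed.)

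What remains is to rule out vacuum and to bound $u_x$; these are the two substantial steps, carried out in \sect{sec:bddensity} and \sect{sec:NSE}. For the lower bound on $\rho$ I would pass to Lagrangian mass coordinates $\xi=\int^x\rho\,dx'$ and set $\tau:=1/\rho$, so that $\tau_t=u_\xi$. Substituting $u=v-\rho_x$ and using that $v$ is constant in Lagrangian time --- equal to $V_0(\xi)$ with $V_0'=(u_{0,x}+\rho_{0,xx})/\rho_0\in L^\infty$ --- turns the continuity equation into the scalar nonlinear diffusion equation
\[
\tau_t=V_0'(\xi)+\partial_{\xi\xi}\Psi(\tau),\qquad \Psi(\tau)=-\tfrac12\tau^{-2},
\]
with $\Psi$ increasing and an \emph{inert}, bounded source. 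At a spatial maximum of $\tau$ one has $\partial_{\xi\xi}\Psi(\tau)\leq0$, hence $\tfrac{d}{dt}\max_\xi\tau\leq\|V_0'\|_{L^\infty}$, giving $\rho\geq m(T):=\bigl(\|1/\rho_0\|_{L^\infty}+\|V_0'\|_{L^\infty}T\bigr)^{-1}>0$ on $[0,T]$. With $m(T)\leq\rho\leq M$ the $u$-equation is uniformly parabolic; differentiating it in $x$, testing against $u_x$, and integrating by parts produces the clean identity
\[
\tfrac12\tfrac{d}{dt}\|u_x\|_{L^2}^2+\int\rho\,u_{xx}^2\,dx=\int(u-2\rho_x)\,u_x u_{xx}\,dx .
\]
Since $|u-2\rho_x|\leq\|u_0\|_{L^\infty}+2R_0$ is bounded uniformly in time, the right-hand side is absorbed into $\int\rho\,u_{xx}^2$ at the price of a term $\tfrac{C}{m(T)}\|u_x\|_{L^2}^2$, and Gronwall yields $\|u_x(\cdot,t)\|_{L^2}\leq C(T)$ together with $\int_0^T\|u_{xx}\|_{L^2}^2\,dt\leq C(T)$.

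From here it is bootstrapping. Differentiating $v_t+uv_x=0$ along characteristics gives $v_x(\cdot,t)=v_{0,x}\,\rho/\rho_0$, hence $\|\rho_{xx}(\cdot,t)\|_{L^\infty}\leq CM+\|u_x(\cdot,t)\|_{L^\infty}$; feeding this, together with $0<m(T)\leq\rho\leq M$ and the $H^1$-control of $u$, into parabolic regularity for the (now uniformly parabolic) $u$-equation upgrades $u$ to $L^\infty([0,T];H^2)$, and then the transport equation $\rho_t+u\rho_x=-\rho u_x$ propagates $\rho\in L^\infty([0,T];H^3)$. In particular $\|u_x\|_{L^\infty}$, $\|\rho\|_{L^\infty}$ and $\|1/\rho\|_{L^\infty}$ remain finite on $[0,T]$, so by the continuation criterion the solution extends to $[0,T]$; since $T>0$ is arbitrary, it is global.

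I expect the main obstacle to be the lower bound on the density: excluding vacuum is the classical difficulty for compressible Navier--Stokes with density-dependent viscosity, and the point of the argument is that the commutator structure --- the fact that $u+\rho_x$ is carried by the flow --- recasts the continuity equation as a maximum-principle-friendly diffusion equation whose source term is frozen in time. The closely related second difficulty is to close the $u_x$ estimate without a Riccati-type blow-up: the dangerous term is $\int u_x^3$, and it is again the transported quantity $v$ that rewrites the offending contributions with the uniformly bounded coefficient $u-2\rho_x$. Once both are in hand, the propagation of $H^2\times H^3$ regularity is routine parabolic and transport bookkeeping.
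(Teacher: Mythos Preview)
Your strategy is essentially the paper's: the key structural input --- that $v=u+\rho_x$ (and hence $e=v_x=u_x+\rho_{xx}$) is transported --- is exactly what the paper exploits, and your $H^1$ estimate on $u$ (test the $u$-equation with $-u_{xx}$ and absorb $(u-2\rho_x)u_xu_{xx}$ into the dissipation) is the paper's computation verbatim. Two points, however, deserve comment.

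For the lower bound on $\rho$ your Lagrangian mass-coordinate detour is correct but unnecessary. Since $V_0'=(u_{0,x}+\rho_{0,xx})/\rho_0=e_0/\rho_0$, your argument is the characteristic version of what the paper does directly in Eulerian variables: evaluate $\rho_t+u\rho_x+e\rho=\rho\rho_{xx}$ at a spatial minimum of $\rho$, where $\rho_{xx}\ge0$, and use $|e|\le\eta_+\rho$ (from transport of $e/\rho$) to get $\dot\rho_-\ge-\eta_+\rho_-^2$; this yields the same $1/t$ decay you obtain, without changing coordinates.

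The more substantive issue is the last step. Your sentence ``the transport equation $\rho_t+u\rho_x=-\rho u_x$ propagates $\rho\in L^\infty([0,T];H^3)$'' is not correct as written: differentiating three times brings in $\rho\,u_{xxxx}$, so propagating $\rho\in H^3$ from the mass equation alone would require $u\in H^4$, which you do not have. The paper avoids this by closing the top-order estimate on the pair $|u''|_2^2+|e'|_2^2$: Lemma~\ref{l:e} gives $\partial_t|e'|_2^2\le C(|e'|_2^2+|u''|_2^2)(|u_x|_\infty+|e|_\infty)$, and in the $|u''|_2$ energy estimate the dangerous term $\int\rho''' u' u''$ is handled via $\rho'''=e'-u''$. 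With $|u_x|_\infty\in L^1_t$ already in hand, Gr\"onwall closes, and $\rho\in H^3$ is recovered \emph{a posteriori} from $\rho'''=e'-u''$. You actually have the ingredients for this --- your relation $v_x=v_{0,x}\,\rho/\rho_0$ is precisely the pointwise form of the $e$-transport --- but the closing mechanism should be stated through $e$, not through the raw continuity equation.
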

Finally, in section \ref{sec:singularaction} we prove the global smooth solutions for the commutator forcing associated with the \emph{singular} action $\aL_\alpha(\rho)=-\Lal(\rho)$  corresponding to singular kernel $\phi_\alpha(r)=|r|^{-(1+\alpha)}, \ 1\leq \alpha<2 $.
\begin{theorem}\label{thm:singular}
Consider the system of equations \eqref{e:FNS} with $1 \leq \alpha <2$ subject to
 initial data $(u_0,\rho_0)\in H^3(\T^1) \times H^{2+\a}(\T^1)$.
Then \eqref{e:FNS} admits a global solution in the same class. 
\end{theorem}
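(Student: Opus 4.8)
The plan is to run the iteration scheme of \sect{sec:scheme}: first reduce global existence to \emph{a priori} control of the density away from vacuum and of $\|u_x\|_{L^\infty}$; then bound $\rho$; then bound the action $\Lal(\rho)$; and finally recover $u_x$ from the two. Local well-posedness in $H^3(\T^1)\times H^{2+\a}(\T^1)$ is standard---the $u$-equation is fractionally parabolic of order $\a\in[1,2)$, the $\rho$-equation is pure transport---and the solution extends globally as soon as one controls, on every bounded time interval, $\|u_x(\cdot,t)\|_{L^\infty}$ and keeps $\rho$ bounded and bounded away from zero. So the whole matter reduces to these estimates.

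Two of them are cheap. Dividing the momentum equation by $\rho$ gives $u_t+uu_x=-\Lal(\rho u)+u\Lal(\rho)$; evaluating at a spatial maximum (resp.\ minimum) of $u$, where $u_x=0$ and the right side is $\le0$ (resp.\ $\ge0$), yields $\|u(\cdot,t)\|_{L^\infty}\le\|u_0\|_{L^\infty}$. More importantly, since $\Lal$ commutes with $\p_x$, differentiating the $u$-equation in $x$ and combining with $\Lal$ applied to the continuity equation shows that the ``good variable''
\[
e:=u_x-\Lal(\rho)\qquad\text{satisfies}\qquad e_t+ue_x=-u_x\,e,
\]
the same commutator identity that drives the bounded Cucker--Smale case. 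Since $\rho_t+u\rho_x=-\rho u_x$ as well, the ratio $e/\rho$ is transported along the flow, whence $\|e(\cdot,t)\|_{L^\infty}\le C_0\|\rho(\cdot,t)\|_{L^\infty}$ with $C_0:=\|(u_0'-\Lal\rho_0)/\rho_0\|_{L^\infty}<\infty$ by the assumed regularity and positivity of $\rho_0$. Note that---in contrast with \eqref{eq:1DCSCT}---\emph{no sign condition} on $e_0$ is needed: only the finite ratio $e_0/\rho_0$, not its sign, enters, and this is precisely why the singular action carries no threshold restriction.

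The density bounds (\sect{sec:bddensity}) follow by writing the continuity equation along characteristics as $\dot\rho=-\rho u_x=-\rho e-\rho\Lal(\rho)$. At a maximum of $\rho$ the nonlinear lower bound $\Lal(\rho)(x)\gtrsim\rho(x)^{1+\a}/\M^{\a}$ together with $|e(x)|\le C_0\rho(x)$ gives $\dot\rho_{\max}\lesssim C_0\rho_{\max}^{2}-c\,\rho_{\max}^{2+\a}/\M^{\a}$, keeping $\rho$ bounded above uniformly because $\a>0$; at a minimum $\Lal(\rho)(x)\le0$, so $\dot\rho_{\min}\ge-C_0\rho_+\,\rho_{\min}$, which keeps $\rho$ away from vacuum on bounded intervals (the sharper uniform statement is worked out in \sect{sec:bddensity}). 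Once $\rho_-\le\rho\le\rho_+$ and $\|e\|_{L^\infty}\le C_0\rho_+$ are in hand, the identity $u_x=e+\Lal(\rho)$ shows that everything comes down to a uniform bound on the action $\|\Lal(\rho)\|_{L^\infty}$, equivalently a uniform Hölder bound on $\rho$ of order $>\a$.

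This last bound is the crux. With $\rho\ge\rho_->0$ the velocity equation is genuinely dissipative: an algebraic rearrangement of the commutator puts it in the form
\[
u_t+uu_x+\rho\,\Lal u=Q_\a(\rho,u),\qquad Q_\a(\rho,u)(x)=c_\a\,\mathrm{p.v.}\!\int\frac{(\rho(x)-\rho(y))(u(x)-u(y))}{|x-y|^{1+\a}}\,dy,
\]
a drift--fractional-diffusion equation with bounded, strictly positive diffusion coefficient, in which the remainder $Q_\a(\rho,u)$ is of order strictly below $\a$ in $u$ as soon as $\rho$ is uniformly Hölder (which closes in the bootstrap). For $\a\in(1,2)$ the diffusion strictly dominates the transport drift, and De Giorgi / nonlinear-maximum-principle estimates give instantaneous Hölder and then $C^{1,\g}$ smoothing of $u$, with constants depending only on $\|u_0\|_{L^\infty}$, $\rho_\pm$ and $C_0$; at the critical endpoint $\a=1$ drift and diffusion are comparable and one runs the method of moduli of continuity, adapted to the bounded positive coefficient $\rho$, obtaining a double-exponential-in-$t$ bound on $\|u_x\|_{L^\infty}$. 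Feeding $u\in C^{1,\g}$ back into the transport equation propagates the matching Hölder regularity of $\rho$, so $\Lal(\rho)\in L^\infty$; a routine energy bootstrap off $\|u_x\|_{L^\infty}$ and $\rho_\pm$ then recovers the full $H^3\times H^{2+\a}$ norms on bounded intervals, and the continuation criterion yields global existence. The main obstacle is exactly this step, and within it the endpoint $\a=1$: there the dissipation only just balances the transport nonlinearity, so only the refined nonlinear-maximum-principle / modulus-of-continuity technology applies, and the bootstrap must be arranged so that the $\a$-order dissipative gain strictly beats the derivative losses coming from transporting $\rho$ and from the bilinear remainder $Q_\a$.
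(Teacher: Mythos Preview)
Your overall architecture matches the paper's: use the transported quantity $e=u_x+\aL_\a(\rho)$, bound $\rho$ above and away from vacuum via the parabolic form $\rho_t+u\rho_x+e\rho=-\rho\Lal\rho$, and then reduce everything to a $W^{1,\infty}$ bound. The gap is in how you obtain that $W^{1,\infty}$ bound.

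Your argument for the crucial step is circular. You rewrite the $u$-equation as $u_t+uu_x+\rho\,\Lal u=Q_\a(\rho,u)$ and say $Q_\a$ is of order strictly below $\a$ ``as soon as $\rho$ is uniformly H\"older (which closes in the bootstrap)''; you then propose to get that H\"older regularity of $\rho$ by transporting it with the $C^{1,\g}$ regularity of $u$ you just derived. But you have no independent source of H\"older regularity to start the loop: with only $\rho\in L^\infty$, the bilinear piece $Q_\a$ is of the \emph{same} order as the dissipation and cannot be treated as a forcing. The paper explicitly flags this obstruction: the $u$-equation has the asymmetric kernel $K(x,h)=\rho(x+h)|h|^{-2}$, ``not even with respect to $h$, so no known results on regularization are directly applicable to the $u$-equation.''

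The paper breaks the circularity by abandoning the $u$-equation at this stage and instead exploiting the parabolic structure of the \emph{density} and \emph{momentum} equations,
\[
\rho_t+u\rho_x+e\rho=-\rho\,\Lal\rho,\qquad m_t+um_x+em=-\rho\,\Lal m,\quad m=\rho u,
\]
whose diffusion kernels $\rho(x)|h|^{-2}$ are even and bounded between $\urho|h|^{-2}$ and $\orho|h|^{-2}$, with bounded drift $u$ and bounded forcing $e\rho$, $em$. Schwab--Silvestre then yields $\rho,m\in C^\g$ unconditionally, hence $u=m/\rho\in C^\g$. Only after this does the paper attack $W^{1,\infty}$, and again not on $u'$ directly: it runs the Constantin--Vicol nonlinear maximum principle on the equations for $\rho'$ and $m'$, using the pointwise control $|e'(x,t)|\le C|\rho'(x,t)|$ (which follows from iterating the transport of $e/\rho$) to close the $\rho'$ estimate, and then feeding the $D\rho'$ dissipation into the $m'$ estimate. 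This is where the double-exponential bound at $\a=1$ actually appears. Your sketch never invokes the $|e'|\lesssim|\rho'|$ bound, and your proposed modulus-of-continuity argument on $u$ would still need the a priori $C^\g$ input on $\rho$ that you have not produced.
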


It is remarkable that the singularity of $\phi_\alpha=|x|^{-(1+\alpha)}$ removes the requirement for a finite critical threshold which is otherwise called for integrable $\phi\in L^\infty_\#$. Specifically,  in section \ref{sec:singulardensity} we prove  that for any singular kernel such that $\lim_{r\downarrow 0[\text{mod}\, 2\pi]} r\cdot \min_{|z|\leq r} \phi(|z|) \uparrow \infty$, the density of the corresponding system \eqref{eq:1DCS} remains \emph{uniformly} bounded which in turn drives the global regularity. The analysis of equation with the singular action $\aL_\alpha$ becomes critical when $\a$ reaches value $1$. The necessary $W^{1,\infty}$-bounds on the solution pair $(u,\rho)$ in this case admit double-exponential growth in time, consult \eqref{eq:double}.

\section{Propagation of global regularity. A general iteration scheme.}\label{sec:scheme} 

\subsection{$L^\infty$-bound of the velocity} 
 We assume that $\aL$ satisfies the following \emph{monotonicity condition}.
Let $\displaystyle \upp{x}=\mathop{\text{arg\,max}}_x g(x)$ and $\displaystyle \low{x}=\mathop{\text{arg\,min}}_xg (x)$. Then for $f\geq 0$
\begin{equation}\label{eq:mon}
\left\{
\begin{split}
 \aL(f g)(\upp{x}) & \leq \aL(f)(\upp{x})g(\upp{x}), \qquad  g(\upp{x})=\max_x g(x)\\
  \aL(f g)(\low{x}) & \geq \aL(f)(\low{x})g(\low{x}), \qquad g(\low{x})=\min_x g(x)
  \end{split}
  \right.
\end{equation}
which holds for $\aL=\aL_\phi$ with positive $\phi$'s. Application of \eqref{eq:mon} with $(f,g)=(\rho,u)$ implies that 
\[
\aT(\rho,u)(\upp{x}) \leq 0 \leq \aT(\rho, u)(\low{x}), \qquad x_\pm= \left\{\begin{split}
&\mathop{\text{arg\,max}} \, u(\cdot,t)\\
&\mathop{\text{arg\,min}} \, u(\cdot,t)\end{split}\right.
\]
 and  yields that $u$ in \eqref{eq:1D} (and likewise --- the velocity components  $u_i$ in \eqref{e:main}) satisfy maximum/minimum principle 
\begin{equation}\label{max}
\min_\bx u_i(\bx,0) \leq u_i(\bx,t)  \leq \max_\bx u_i(\bx,0) 
\end{equation}
Likewise, $\|\bu(\cdot,t)\|_{L^\infty}$ remains finite for $\phi\in  L^\infty_\#$ with arbitrary sign.  

\subsection{Critical threshold and a first order conservation law}\label{sec:CT} 
We outline our general strategy for  tracing the  global regularity of \eqref{eq:1D}.
The key observation is that the commutator form of \eqref{eq:1D} entails the transport of $u_x+\aL(\rho)$  away from vacuum. To this end, differentiate  \eqref{e:main} to find that  $u':= u_x$ satisfies
\begin{equation}\label{}
u'_t + u u'_x + (u')^2 = \aL(\rho u)_x -u\aL(\rho)_x- u' \aL(\rho).
\end{equation}
For the latter we use the density equation, $\aL(\rho u)_x= \aL\big((\rho u)_x\big)=- \aL(\rho)_t$  to conclude
\[
(u' + \aL(\rho))_t + u(u' + \aL(\rho))_x + u' (u' + \aL(\rho)) = 0.
\]
This calls for introduction of the new variable, $e := u' + \aL(\rho)$, which is found to satisfy 
\begin{equation}\label{e:e}
e_t + (u e)_x = 0, \qquad e=u'+\aL(\rho).
\end{equation}
Together with the density equation, this yields that $e/\rho$ is governed by the transport equation
\begin{equation}\label{eq:trans}
\left(\frac{e}{\rho}\right)_t +u\left(\frac{e}{\rho}\right)_x=0.
\end{equation}
Hence $e/\rho$ remains constant along the
 characteristics $\dot{x}(t) = u(x(t),t)$,
\begin{equation}\label{char}
\frac{e(x(t),t)}{\rho(x(t),t)} = \frac{e_0(x)}{\rho_0(x)}. 
\end{equation}
It follows that if $e_0/\rho_0$ is allowed to have singularities, then these initial singularities will propagate along characteristics and a solution of \eqref{eq:1D} will consist of strips of regularity trapped between the curves carrying these singularities.   
To avoid this scenario,  calls for the following bound to hold.
\begin{assumption}{[Critical threshold]}\label{ass:CT} There exist finite constants $\CTm \leq 0 < \CTp$ such that
\begin{equation}\label{eq:bound}
\CTm \leq \frac{e_0(x)}{\rho_0(x)} \leq \CTp \ \ \text{for all} \ \  x\in \Omega.
\end{equation}
\end{assumption}
\begin{remark}
We note in passing that integration of \eqref{eq:bound} yields $\low{\eta}\M \leq \int \big(u'_0 + \aL(\rho_0)\big) dx$.  Hence, since $\aL_\phi(\rho_0)$ has zero mean and  $u(\cdot,t)$ is either periodic or assumed to have  vanishing  far-field boundary values,  it follows that \eqref{eq:bound} requires $\low{\eta}\leq 0$.
\end{remark}
 
 \medskip
We will investigate the propagation of regularity of solutions subject to sub-critical initial data \eqref{eq:bound}.
\subsection{The iteration scheme --- a priori control estimates via $e$} 
The study of global well-posedness for all three cases of commutator forcing we have in mind --- bounded, singular and local (NS) mollifiers, share a common scheme of establishing control over the key quantities, even though the handling of the three cases is quite different when it comes to analytic details. In this section we highlight those main common features in three steps.

\medskip\noindent
$\bullet$ Step \#1 (\emph{Pointwise bounds on the density}). Our aim is to show that for a certain range of threshold bounds $\CTm \leq 0 <\CTp$, the density remains bounded from above and away from the vacuum
\begin{equation}\label{eq:rhob}
0< \rho_- \leq \rho(\cdot,t) \leq \upp{\rho} < \infty.
\end{equation}
In view of transportation of the ratio $e/\rho$, \eqref{char}, we also have
\begin{equation}\label{eq:pointwise}
\CTm \leq  \frac{e(\cdot,t)}{\rho(\cdot,t)} \leq \CTp, \qquad \CTm\leq0.
\end{equation}
We conclude that the quantity of interest, $e=u_x+\aL(\rho)$, will remain uniformly bounded, 
\[
\low{e}:=\CTm\upp{\rho} \leq e(\cdot,t) \leq \upp{e}:=\CTp\upp{\rho}.
\]

\noindent
$\bullet$ Step \#2 (\emph{Pointwise bound on the action $\aL(\rho)$ and slope $u_x$}). 
Equipped with the uniform bound on $e$ we turn to establish a bound on the action $\aL(\rho)$, which is equivalent to controlling the slope $u_x$.
In the case of bounded mollifiers we seek a pointwise bound on the action $\aL(\rho)$
\begin{equation}\label{eq:Lb}
\low{\aL}\leq \aL(\rho) \leq \upp{\aL}, \qquad \rho\in L^1_+\cap L^\infty.
\end{equation}
This will imply the desired $C^1$-bound of the velocity 
\[
\CTm\upp{\rho}-\low{\aL} \leq u_x(\cdot, t) \leq \CTp\upp{\rho}+\upp{\aL}.
\]
For singular fractional mollifiers $\aL_\alpha$, we focus on the critical case $\a=1$, where we  use a nonlocal maximum principle to establish control over $\rho'$ which in turn enables us to control $u_x$ indirectly, thus avoiding an additional  obstacle coming from the Hilbert transform. For the NS case, we first control the slope $u_x$ via energy bounds, then conclude with control of $\aL(\r)= \r_{xx}$.

It is clear from the fact that the higher-order quantity $e$ satisfies lower-order estimates that a proper statement of well-posendess result for singular mollifiers requires $\rho$ to be in a regularity class $X^{s+\a}$ provided $u$ is in the class $X^{s+1}$, while $e$ is in the class $X^s$.

\medskip \noindent
$\bullet$ Step \#3 (\emph{Higher regularity control}).  The necessary bounds sought in \eqref{eq:rhob},\eqref{eq:Lb} may require a restricted set of initial configurations depending on finite critical threshold assumed in \eqref{eq:bound}. Whether these thresholds $\eta_\pm$ are restricted or not, the corresponding bounds will be derived solely on the basis of  the mass equation for $\rho$, and the fact that $e=u_x+\aL(\rho)$ satisfies the transport equation \eqref{eq:trans}. This argument can be iterated to higher derivatives as follows.
 Note that if  a quantity $Q$ is transported, $Q_t + u Q_x = 0$, then the \emph{same} transport equation governs $Q_x/\rho$ 
\begin{equation}\label{e:tranQ}
\left(\frac{Q_x}{\rho}\right)_t+u\left(\frac{Q_x}{\rho}\right)_x=0.
\end{equation}
 Let us apply this argument to $Q = e/\rho$: then if  $|(e/\rho)_x|/\rho$ is bounded at $t=0$ it will remain bounded at later time. Unraveling  the formulas, we obtain the pointwise bound
\begin{equation}\label{e:control1}
|e'(x,t)| \leq C(e_\pm,\rho_\pm) |\rho'(x,t)|.
\end{equation}
This control bound will become a key tool in proving Theorem~\ref{thm:singular}. 

Now that $(e/\rho)_x /\rho$ is transported, we can apply the argument above repeatedly to obtain a hierarchy of pointwise bounds 
\begin{equation}\label{e:high}
|e^{(k)}(x,t)| \leq C |\rho^{(k)}(x,t)|, \qquad k=0,1,\ldots
\end{equation}
It is therefore clear that such bounds would allow to apply the same control principle as stated above in extending our results  into higher order Sobolev spaces. However, we will leave to pursue this direction to a future work.

\section{Bounded density in one-dimensional equations in commutator form}\label{sec:bddensity}
In this section we implement the above strategy for global regularity in the presence of commutator forcing, $\aT_\phi$, depending on the properties of the mollifier  $\phi$.
We begin with a general discussion on the boundedness of the density sought in step \#1.
Here, the bound \eqref{eq:rhob} is driven by the diffusive character of the mass equation, which is revealed once we rewrite the mass equation of \eqref{eq:1D} in the form
\begin{equation}\label{eq:erho}
\rho_t + u\rho_x = -e\rho+\rho \aL(\rho).
\end{equation}
In view of the uniform bound \eqref{eq:pointwise}, we see that $e\rho \sim \rho^2$ behaves as a quadratic term. This  implies
\begin{equation}\label{eq:bdrho}
-\upp{\eta}\rho^2 +\rho \aL(\rho) \leq \rho_t + u\rho_x \leq  -\low{\eta}\rho^2+\rho \aL(\rho).
\end{equation}

We turn to check step \#1 in the three cases of interest.
 
\subsection{Bounded density with bounded mollifiers $\aL_\phi, \, \phi\in L^\infty_\#$}\label{sec:bounded}
Consider the case of  $\aL=\aL_\phi= \int \phi(|x-y|)(\rho(y)-\rho(x))dy$ with $\phi \in L^\infty_\#$ which is assumed to have a positive mass $\int \phi(r)dr>0$. We emphasize that $\phi$ need not be positive. We verify  the boundedness of $\rho$ using the straightforward bound 
\begin{equation}\label{eq:bdL}
-I(\phi)\rho-\phinf\M \leq \aL_\phi(\rho)  \leq -I(\phi)\rho+\phinf\M, \qquad I(\phi):=\int \phi(r)dr>0.
\end{equation}
Inserted into \eqref{eq:bdrho} we find
\[
-\big(\upp{\eta}+I(\phi)\big)\rho^2 -\phinf\M\rho  \leq \rho_t + u\rho_x \leq -\big(\low{\eta}+I(\phi)\big)\rho^2+\phinf\M\rho.
\]
The inequality on the left shows that  along characteristics, the density is bounded away from vacuum by a lower-bound $\rho(t)\gtrsim e^{-\phinf\M t}\rho_0$. The inequality on the right shows that the density remains bounded from above for \emph{any} $\low{\eta} > -I(\phi)$, that is, provided \eqref{eq:bound} holds for such $\low{\eta}$'s,
\begin{equation}\label{eq:CTphib}
u'_0(x) +\phi*\rho_0(x)-I(\phi)\rho_0(x) \geq \low{\eta}\rho_0(x), \qquad \low{\eta} > -I(\phi).
\end{equation}

\subsection{Bounded density with singular mollifiers $\aL_\alpha, \, \alpha<2$}\label{sec:singulardensity}
To bound the density from above, we consider the case of positive  mollifiers  which are singular in the sense that  
\begin{equation}\label{eq:Lsing}
\lim_{r\downarrow 0} rm_\phi(r) \uparrow \infty, \qquad m_\phi(r):=\min_{|z|\leq r} \phi(|z|).
\end{equation}
In this case we use the bound
\[
\begin{split}
\aL(\rho)(\upp{x}) & \leq \int_{|x-y|\leq r} \phi(|x-y|)(\rho(y)-\upp{\rho})dy\\
&  \leq m_\phi(r)\int_{|x-y|\leq r} (\rho(y)-\upp{\rho})dy \leq m_\phi(r)\M- 2rm_\phi(r)\upp{\rho}.
\end{split}
\]
By assumption, for \emph{any} $\low{\eta}\leq 0$ we can choose a small enough $r=\upp{r}$ such that $2\upp{r} m_\phi(\upp{r}) =1-\low{\eta}$ and the bound on the right of \eqref{eq:bdrho} then implies that the maximal value of the density $\upp{\rho}(t)=\rho(\upp{x}(t),t)$ satisfies
\[
\upp{\dot{\rho}} \leq -\low{\eta}\upp{\rho}^2 - 2\upp{r}m_\phi(\upp{r})\upp{\rho}^2 + c_0\upp{\rho} \leq -\upp{\rho}^2 +c_0\upp{\rho}, \qquad c_0=  m_\phi(\upp{r})\M
\]
Thus, $\rho(\cdot,t)$ remains bounded from above. We conclude that for  singular kernels satisfying \eqref{eq:Lsing} , the density remains upper-bounded \emph{independent} of the lower threshold $\CTm$. In particular, this  applies to $\phi_\alpha(r) = r^{-(1+\alpha)}, \alpha<2$.

We turn to the lower bound on the density away from vacuum. For positive $\phi$'s, whether singular or not, we have\footnote{This is a special case of the  monotonicity condition \eqref{eq:mon} with $(f,g)=({\boldsymbol 1}, \rho)$ implies 
$\aL(\rho)(\low{x}) \geq  \aL({\boldsymbol 1}(\low{x}))\low{\rho}=0$.}
\[
\aL_\phi(\rho)(\low{x})=\int_y \phi(|x-y|)(\rho(y)-\rho(\low{x}))\rho(y)dy \geq 0. 
\]
Therefore, the inequality on the left of \eqref{eq:bdrho} implies that  minima values of the density,
$\low{\rho}(t)=\rho(\low{x}(t),t)$ at  \emph{any interior} point $\low{x}(t)=\mathop{\text{arg\,min}}_{|y|\leq R}\{\rho(y,t)\}$ with $|\low{x}| <R$, 
satisfy   $\low{\dot{\rho}} \geq -\upp{\eta}\low{\rho}^2$   and hence $\rho(\cdot,t) >0$.
In the particular case of  the torus $\Omega={\mathbb T}^1$, we conclude with a 
\emph{uniform} lower bound away from vacuum
\begin{equation}\label{eq:away-from-vacuum}
\rho(\cdot,t) \geq \low{\rho}(t)=\frac{\low{(\rho_0)}}{t\upp{\eta}\low{(\rho_0)}+1}, \qquad 
\low{(\rho_0)}=\min_{x\in {\mathbb T}^1}\rho_0(x) >0.
\end{equation}

\subsection{Bounded density with NS equations $\aL_2=\partial_{xx}$}\label{sec:nsdensity}
We use the  regularization coming from te parabolic part of the mass equation 
which  becomes evident when \eqref{e:NS} is written in the form
\begin{equation}\label{eq:nsagain}
\rho_t+u\rho_x +e\rho= \rho \rho_{xx}.
\end{equation}
It implies the lower-bound which could be read from the LHS of \eqref{eq:bdrho},
$\low{\dot{\rho}} \geq -\upp{\eta}\low{\rho}^2$ which recovers the same lower-bound \eqref{eq:away-from-vacuum}. Trying to pursue  the same argument  for an upper-bound of the density fails when using the RHS of \eqref{eq:bdrho}. Instead, we note that the quantity $f:=u+\rho_x$  is the primitive of $e$ and hence satisfies the transport equation
$f_t+uf_x=0$. This follows by direct computation of \eqref{eq:1D} 
\[
u_t+uu_x  =-(\rho'_t + u\rho'_x) .
\]
 It follows that 
\begin{equation}\label{e:rhox}
|\rho'(\cdot,t)| \leq 2|u_0|_\infty + |\rho'_0|_\infty.
\end{equation}
Here and throughout $|\cdot|_p$, $1\leq p \leq \infty$, denotes the $L^p$-norm. Given the uniform bound on $\rho'$ and since we already proved that $\rho >0$, \eqref{e:rhox} ties the upper bound for $\rho$ as well.  We note in passing that even though we can now express the density equation as a \emph{pure} diffusion
\begin{equation}\label{eq:pure}
\rho_t =\rho\rho_{xx} + F,
\end{equation}
with bounded forcing $ F=-u\rho_x -\rho e \in L^\infty$, we can only reach the end-point Schauder estimate $\rho_{xx} \in BMO$ (see \cite{Schl1996}), which is not enough to secure a uniform bound  of $\aL(\r)=\r_{xx}$ necessary to get control over the slope $u_x$. We will provide additional details how to reach that bound, which is needed for the global existence of NS equations in section \ref{sec:NSE} below.

\section{Global existence: bounded mollifiers, $\aL=\aL_\phi$}\label{sec:boundedL}
With regard to Theorem~\ref{thm:bdd},  it is straightforward to verify step \#2 in the case of bounded mollifiers --- in view of \eqref{eq:bdL}, the upper-bound of $\rho$ implies that $\aL(\rho)$ is uniformly bounded, $|\aL_\phi(\rho)+I(\phi)\upp{\rho}| \leq \phinf\M$, and hence $u_x=e-\aL(\rho)$ is uniformly bounded. We conclude the global regularity for sub-critical initial data satisfying \eqref{eq:CTphib}, namely, for a fixed $\epsilon>0$ there holds 
\[
u'_0(x) +\phi*\rho_0(x) \geq \epsilon\rho_0(x), \qquad \epsilon >0.
\]
In the particular case of $\T^1$, this requires the positivity of $u'_0+\phi*\rho_0$ stated in theorem \ref{thm:bdd}. This recovers the same critical threshold of positive mollifiers \eqref{eq:1DCSCT}.

\section{Global existence: Navier-Stokes equations, $\aL=\Delta$}\label{sec:NSE}
In this section we will prove Theorem~\ref{thm:ns}. Recall that the boundedness of $\rho$ conveys to boundedness of $e=u_x+\rho_{xx}$, via
\begin{equation}\label{eq:uxbd}
|e(x,t)| \leq \eta |\rho(x,t)|,
\end{equation}
and that $\rho$ satisfies further a priori $C^1$-regularity \eqref{e:rhox}. Note that these low-regularity a priori bounds  hold classically under the assumptions  $u\in H^2$, $\rho \in H^3$, which are the spaces for which Theorem~\ref{thm:ns} is stated, and these are the lowest integer regularity $H^n$-spaces that justify the above computations. We now proceed by establishing a priori estimates in these spaces.

First, let us quantify control over the high-order regularity of $e$.

\begin{lemma}\label{l:e} For each $n =0,1,...$ we have the following a priori estimate
\begin{equation}\label{eq:leb} 
\p_t |e|_{H^n}^2 \leq C (|e|_{H^n}^2 + |u|_{H^{n+1}}^2)( |u_x|_\infty + |e|_\infty).
\end{equation}
\end{lemma}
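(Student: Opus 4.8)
The plan is to differentiate the equation for $e$, namely $e_t + (ue)_x = 0$, $n$ times in $x$, test against $\p_x^n e$, and estimate each commutator term that arises. Writing $e_n := \p_x^n e$, the equation $\p_t e_n + \p_x^n(ue_x) + \p_x^n(u_x e) = 0$ gives, after integrating against $e_n$ over $\T^1$,
\begin{equation}\label{eq:en-energy}
\frac12 \p_t |e_n|_2^2 = -\int_{\T^1} e_n\, \p_x^n(u e_x)\, dx - \int_{\T^1} e_n\, \p_x^n(u_x e)\, dx.
\end{equation}
For the first term I would use the standard commutator trick: $\int e_n \p_x^n(ue_x)\,dx = \int e_n\big(\p_x^n(ue_x) - u e_{n+1}\big)\,dx + \int e_n u e_{n+1}\,dx$, and the last piece equals $-\frac12\int u_x e_n^2\,dx$ after integration by parts, which is bounded by $|u_x|_\infty |e_n|_2^2$. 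The commutator $\p_x^n(ue_x) - u\p_x^{n+1}e$ involves only derivatives of $u$ of order $\geq 1$ paired with derivatives of $e$ of order $\leq n$, so by the Kato–Ponce / Moser-type product estimates it is controlled in $L^2$ by $|u_x|_\infty |e|_{H^n} + |e|_\infty |u|_{H^{n+1}}$ (distributing the top derivative onto whichever factor keeps the other in $L^\infty$). The second term in \eqref{eq:en-energy} is lower order and is handled the same way: $\p_x^n(u_x e)$ is an $L^2$ function bounded by $|u_x|_\infty|e|_{H^n} + |e|_\infty|u|_{H^{n+1}}$ via the Moser inequality. Summing \eqref{eq:en-energy} over derivative orders $0,\dots,n$ and using Young's inequality to absorb the mixed products into $(|e|_{H^n}^2 + |u|_{H^{n+1}}^2)(|u_x|_\infty + |e|_\infty)$ yields \eqref{eq:leb}.

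The one genuinely delicate point is the appearance of $|u|_{H^{n+1}}$ rather than $|u|_{H^n}$ on the right-hand side: because $e = u_x + \rho_{xx}$ has the same order as $u_x$, testing the $e$-equation against $\p_x^n e$ forces a term with $n+1$ derivatives on $u$ (the top-order term in the commutator, where all derivatives land on $u$). This is exactly why the statement is phrased with the extra derivative on $u$; one cannot close on $|e|_{H^n}$ alone. The low-regularity inputs $|u_x|_\infty$ and $|e|_\infty$ are available: $|e|_\infty \leq \eta|\rho|_\infty$ from \eqref{eq:uxbd} together with the uniform bound on $\rho$, and $|u_x|_\infty \leq |e|_\infty + |\rho_{xx}|_\infty$ — though in fact for the purposes of this lemma one keeps $|u_x|_\infty$ as a free quantity to be controlled later.

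The main obstacle I anticipate is bookkeeping rather than conceptual: making sure that in every term of the Leibniz expansion of $\p_x^n(ue_x)$ and $\p_x^n(u_xe)$, at least one factor can be placed in $L^\infty$ with only the $|u_x|_\infty$ or $|e|_\infty$ norm (not a higher $L^\infty$ norm of $u$ or $e$), so that the remaining Sobolev norm is at most $|e|_{H^n}$ or $|u|_{H^{n+1}}$. The borderline case is the term $\binom{n}{1}\p_x u \cdot \p_x^n e$ inside $\p_x^n(ue_x)$ (and its mirror in the other product), where the interpolation $|\p_x u|_{L^\infty}$-versus-$|e|_{H^n}$ split must be chosen correctly; Gagliardo–Nirenberg interpolation $|\p_x u|_\infty \lesssim |u_x|_\infty^{1-\theta}\cdots$ is not even needed here since $|u_x|_\infty$ appears directly. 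Once the term-by-term accounting is set up the rest is routine Cauchy–Schwarz and Young.
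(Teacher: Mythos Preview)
Your approach is correct and is essentially the same as the paper's. The paper differentiates \eqref{e:e} $n$ times, tests with $e^{(n)}$, integrates the transport piece $u e^{(n+1)} e^{(n)}$ by parts, and then bounds each remaining Leibniz term $u^{(k)} e^{(n+1-k)} e^{(n)}$ individually using the Gagliardo--Nirenberg inequalities $|\partial^i f|_{2n/i} \leq |f|_\infty^{1-i/n}|f|_{H^n}^{i/n}$ applied to $f=u_x$ and $f=e$, followed by Young. Your invocation of Kato--Ponce/Moser is just the packaged form of the same Gagliardo--Nirenberg computation; the key observation (which you identify) is that every term in the commutator has the structure $(u_x)^{(j)} e^{(k)}$ with $j+k=n$, so the bilinear estimate lands on $|u_x|_\infty$, $|e|_\infty$ rather than any higher $L^\infty$ norms.

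One small caution on your bookkeeping: if you literally apply the off-the-shelf Kato--Ponce commutator estimate to $[\partial_x^n,u]e_x$ with inputs $(u,e_x)$, the standard form produces a term $|e_x|_\infty |u|_{H^n}$, which is not what you want. You avoid this precisely by your remark that the commutator ``involves only derivatives of $u$ of order $\geq 1$ paired with derivatives of $e$ of order $\leq n$'', i.e.\ you reinterpret it as a bilinear expression in $(u_x,e)$ of total order $n$. That shift is what makes the Moser bound come out with $|u_x|_\infty$ and $|e|_\infty$ only. This is exactly what the paper's term-by-term GN argument accomplishes, so there is no gap --- just make sure when you write it out that you apply the product estimate to $(u_x,e)$ and not to $(u,e_x)$.
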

\begin{proof}
For $n=0$ the Lemma follows easily by testing the $e$ equation \eqref{e:e}.  For $n=1,...$, let us differentiate \eqref{e:e} $n$ times and test with $e^{(n)}$. 
We obtain (dropping the integral signs)
\[
\p_t |e^{(n)}|_{2}^2 \lesssim u e^{(n+1)} e^{(n)} + \sum_{k=1}^{n+1} u^{(k)} e^{(n+1 - k)} e^{(n)}.
\]
For the first term we integrate by parts to obtain trivially $ | u e^{(n+1)} e^{(n)} | \lesssim |u_x|_\infty |e|_{H^n}^2 $.  
For each of the remaining terms on the right, $k=1,\ldots, n+1$, we apply Gagliardo-Nirenberg inequalities,
$| \p^i f |_{\frac{2n}{i}} \leq |f|_\infty^{1- \frac{i}{n}} |f|_{H^n}^{\frac{i}{n}}, \ 1\leq i \leq n$, obtaining
\[
\begin{split}
| u^{(k)} e^{(n+1 - k)} e^{(n)} | & \leq |e^{(n)} |_2 |u_x^{(k-1)}|_{\frac{2n}{k-1}} |e^{(n+1-k)}|_{\frac{2n}{n+1-k}} \\
& \leq |e^{(n)} |_2 | u_x |_\infty^{1- \frac{k-1}{n}} | u_x |_{H^n}^{\frac{k-1}{n} } | e |_\infty^{1- \frac{n+1- k}{n}} |e|_{H^n}^{\frac{n+1- k}{n} } \\
&\leq |e^{(n)} |_2^{\frac{2n + 1 - k}{n}} | u_x |_{H^n}^{\frac{k-1}{n} } | u_x |_\infty^{1- \frac{k-1}{n}} | e |_\infty^{1- \frac{n+1- k}{n}}
\end{split}
\]
and by Young's inequality 
$| u^{(k)} e^{(n+1 - k)} e^{(n)} | \leq (|e|_{H^n}^2 + |u|_{H^{n+1}}^2)( |u_x|_\infty + |e|_\infty)$ which completed the proof.
\end{proof}

We now proceed establishing bounds on $u_x$ and $u_{xx}$ in a sequence of increasing norms, which eventually will close the estimates together with Lemma~\ref{l:e}. Recall
\begin{equation}\label{e:uagain} 
u_t + u u' = \rho u'' + 2 \rho' u'.
\end{equation}
Testing with $u$ and using \eqref{e:rhox}  we obtain 
\[
\p_t |u|_2^2 = - \int \rho |u'|^2 + \int \rho' u u' \leq - \frac12 \int \rho |u'|^2 + 2\rho_-^{-1} |u|_2^2.
\]
This proves the natural energy  bound $u\in L^\infty_t L^2_x \cap L^2_t H^1_x$.  Next, we test with $ - u''$ to obtain (dropping the integrals)
\[
\p_t |u'|_2^2 = |u u' u''| - \rho |u''|^2 + 2 |\rho' u' u''| \lesssim - \frac12  \rho |u''|^2 + |u|^2|u'|^2 + |\rho'|^2|u'|^2.
\]
Using uniform bound on $u$ and \eqref{e:rhox},
\[
\p_t |u'|_2^2 \leq - \frac12  \rho_- |u''|_2^2 + C |u'|^2_2, \qquad \low{\rho}=\min \rho(\cdot,t)>0,
\]
which implies $u \in L^\infty_t H^1_x \cap L^2_{t} H^2_x$.  In particular, this implies $|u_x(\cdot,t)|_\infty \in L^1$, and hence the estimates on $H^1$-norm of $e$ from Lemma~\ref{l:e}  closes with an integrable multiplier on the right hand side of \eqref{eq:leb}. It remains to establish a further similar bound on $|u''|_2$ to close the estimate on the grand quantity $|u''|_2^2 + |e'|_2^2 \sim |u''|_2^2 + |\rho|_{H^3}^2$. So, we differentiate \eqref{e:uagain} twice and test with $u''$:
\[
\begin{split}
\p_t |u''|_2^2 + \frac52 u'u'' u'' & = - (\rho u'')' u''' + 2 \rho''' u' u'' + 4 \rho'' u'' u'' + 2 \rho' u''' u'' \\
& = -\rho |u'''|^2 +  \rho' u''' u'' + 2 \rho''' u' u'' + 4 \rho'' u'' u''.
\end{split}
\]
So,
\[
\begin{split}
\p_t |u''|_2^2 & \lesssim |u_x|_\infty |u''|_2^2  - \frac12 \rho_- |u'''|_2^2 + |\rho'|_\infty |u''|_2^2 + |u_x|_\infty |\rho'''|_2 |u''|_2 + 4 | e - u_x|_\infty |u''|_2^2 \\
& \lesssim - \frac12 \rho_- |u'''|^2_2 +|u_x|_\infty (|e'|^2_2 + |u''|^2_2).
\end{split}
\]
Given the established integrability of $|u_x|_\infty$ and Lemma~\ref{l:e} we have proved boundedness in $H^2$ for $u$, and $H^3$ for $\rho$.

\section{Global existence: singular mollifiers $\aL=\aL_\a$, $1\leq \a < 2$}\label{sec:singularaction}
In this section we prove global regularity result for the equation with fractional $\aL = - \L_\a$ in space of data $H^3 \times H^{2+\a}$. The case $\a=1$ is critical similar to the classical fractional Burgers equation, \cite{KNS2008, CV2010, CV2012} but with additional non-linearity in the dissipation term.  We will leave the subcritical case $\a>1$ as an easy  consequence of the proof presented here for the case $\a=1$.  Note that with the initial datum $(u_0,\rho_0)$ in $H^3$ we can avoid making assumptions on $e_0$ as $e_0 \in H^2 \ss C^{1+\g}$ for any $\g<1/2$ by the Sobolev embedding. With this we recall a priori uniform bounds from  the previous section,
\begin{equation}\label{e:eru}
\sup_{0<t<T} | e|_\infty  < \infty, \quad 0< \low{\r}  \leq \rho(x,t) \leq \upp{\r}, \quad 
\low{u}  \leq u(x,t) \leq \upp{u}.
\end{equation}
on any finite time interval of existence. The lower bound on the density is  the main reason why we resort to the periodic domain. In the open space such bound is only known to hold on any finite interval, lacking a uniform parabolicity to the system.

Moreover, for a solution in $H^3$ the transport equation \eqref{e:tranQ} for $Q = e/\rho$ can be solved classically along characteristics of $u$ which results in the bound \eqref{e:control1} which we quote for convenience
\begin{equation}\label{e:high1}
|e_x(x,t)| \leq C |\rho_x(x,t)|, \text{ for all } (x,t) \in [0,T) \times \T.
\end{equation}
The proof will consist of four steps. First, we establish local existence in $H^3$ by obtaining rough a priori bounds without exploiting dissipation term. This allows to perform classical desingularization of the kernel as an approximate scheme to obtain local solutions. Second, we establish  uniform control over first order quantities $|\rho_x|_\infty$, $|u_x|_\infty$ over the interval of regularity. The strategy here  resembles the treatment of the critical SQG by Constantin and Vicol \cite{CV2012}, but with additional technicalities related to the non-linear nature of the dissipation term.  We then invoke the results of Schwab and Silverstre \cite{SS2012} to obtain instantaneous $C^\g$-regularization and use it to have an easier control on the oscillations in the midrange of scales of the non-linearity. Third, we establish uniform control over $H^2$ norm of solutions by proving an analogue of the Beale-Kato-Majda estimates. With the $H^2$ and $W^{1,\infty}$ bounds we finally conclude by a proving a uniform control of the penultimate $H^3$-norm of the solution on the entire interval of existence.

It will be useful to introduce the following notation. For three functions $f,g,h$ of $x,z$ we denote
\[
\F(f,g,h) := \frac12 \iint \frac{f(x,z)g(x,z)h(x,z)}{|z|^2} dz dx.
\]
Moreover, for a cutoff function $\f$ and parameter $r>0$ we denote
\[
\begin{split}
\F_{<r}(f,g,h) &= \frac12 \iint \frac{f(x,z)g(x,z)h(x,z)}{|z|^2} \f(z/r) dz dx \\
\F_{>r}(f,g,h) &= \frac12 \iint \frac{f(x,z)g(x,z)h(x,z)}{|z|^2} (1-\f(z/r)) dz dx.
\end{split}
\]
In the sequel we will also use the following notation $\d_z f(x) = f(x+z) - f(x)$, and the expansion
\begin{equation}\label{e:taylor}
\d_z f(x) = f'(x) z + z^2 \int_0^1 (1-\th) f''(x+ \th z) d\th.
\end{equation}

\subsection{Local well-posedness in $H^3$: a priori estimates without the use of dissipation} The purpose of this section is to obtain a priori estimates in $H^3$ which do not rely on the dissipation term. Namely, we will obtain the classical Riccati equation for the quantity $Y = |u|_{H^3} + |\rho|_{H^3} \sim |u|_{H^3} + |e|_{H^2} + |\r|_2$:
\[
Y_t \leq C Y^2,
\]
which is independent of desingularization of the kernel $K_\d = \frac{1}{(|z|^2 + \d^2)^{\frac{n+1}{2}}}$. This allows to conclude local existence via the classical approximation methods.

Let us write the equation for $u'''$:
\begin{equation}\label{e:u'''}
u'''_t + u u'''_x + 4u'u'''+ 3u'' u'' = \aT(\r''',u)+ 3 \aT(\r'',u') + 3\aT(\r',u'') + \aT(\r,u''').
\end{equation}
Testing with $u'''$ we obtain (we suppress integral signs and note that $\int u''u''u''' = 0$)
\begin{equation}\label{e:eb}
\p_t |u'''|_2^2 = - 7 u'(u''')^2 + 2 \aT(\r''',u)u''' +6 \aT(\r'',u')u''' + 6 \aT(\r',u''))u''' + 2\aT(\r, u''')u'''.
\end{equation}
We will now perform several estimates with the purpose of extracting term $|u'''|_2^2$ on the right hand side, times a lower order term in $u$ and possibly a top order term in $\r$ which we will address subsequently. First, we have trivially
\begin{equation}\label{e:b1}
|u'(u''')^2| \leq |u'|_\infty |u'''|_2^2.
\end{equation}
Let us estimate the dissipative term first:
\[
\int \aT(\r,u''')u''' dx = \iint \r(y) u'''(x) (u'''(y) - u'''(x)) \frac{dy\ dx}{|x-y|^2}.
\]
Switching $x$ and $y$ and adding cross-terms $\r(x)u'''(x)$ we obtain
\[
\begin{split}
\int \aT(\r,u''')u''' dx & = - \frac12 \iint \r(x) (u'''(y) - u'''(x))^2 \frac{dy\ dx}{|x-y|^2} \\
& + \frac12 \iint u'''(x)(\r(y) - \r(x)) (u'''(y) - u'''(x)) \frac{dy\ dx}{|x-y|^2}.
\end{split}
\]
The first term is clearly negative. We note in passing that it is bounded below by
\[
\iint \r(x) (u'''(y) - u'''(x))^2 \frac{dy\ dx}{|x-y|^2} \geq \urho |u'''|_{H^{1/2}}^2.
\]
While it is undoubtedly a crucial piece of information, it does depend on the fact that the kernel is singular. As we indicated earlier, however, we seek estimates that are independent of singularity.  So, at this point we will simply dismiss the dissipation term.  As to the second term, we rewrite it as
\[
\iint u'''(x)(\r(y) - \r(x)) (u'''(y) - u'''(x)) \frac{dy\ dx}{|x-y|^2} = \F( u''',\d_z\r, \d_z u''').
\]
We estimate the large-scale part of the integral using integrability of $|z|^{-2}$ at infinity as follows
\begin{equation}\label{}
\F_{>1}( u''',\d_z\r, \d_z u''') \leq |\r|_\infty |u'''|_2^2.
\end{equation}
As to the small scale, we use the expansion \eqref{e:taylor} on $\rho$. We have
\[
\begin{split}
\F_{<1}( u''',\d_z\r, \d_z u''') & =  
\iint \f(z) u'''(x) \r'(x) \d_z u'''(x) \frac{dz\ dx}{z} \\
&+
\int_0^1 (1-\th)  \iint \f(z) u'''(x) \r''(x+ \th z) \d_z u'''(x) dz\ dx \ d\th.
\end{split}
\]
Writing the first integral in the principal value sense results in the cancellation
\[
\iint \f(z) u'''(x) \r'(x) u'''(x) \frac{dz\ dx}{z} = 0,
\]
while
\[
\iint \f(z) u'''(x) \r'(x) u'''(x+z) \frac{dz\ dx}{z} = \int u'''(x) \r'(x) H_\f(u''')(x) dx,
\]
where $H_\phi$ is the truncated Hilbert transform given by convolution with the kernel $\frac{\f(z)}{z}$. It is a bounded operator on $L^2$. We thus have the estimate
\[
\left| \int u'''(x) \r'(x) H_\f(u''')(x) dx \right| \leq |\r'|_\infty |u'''|_2^2.
\]
Putting the estimates together, we arrive at the bound
\begin{equation}\label{e:b2}
\int \aT(\r,u''')u''' dx \leq  C |u'''|_2^2 ( |\r|_\infty + |\r'|_\infty).
\end{equation}
We now proceed with the remaining three terms in \eqref{e:eb} in a similar fashion. We have
\begin{equation*}\label{}
\begin{split}
\aT(\r''',u)u''' & = \F(\r'''(\cdot+z), u''', \d_z u)= \F_{>1}(\r'''(\cdot+z), u''', \d_z u)
+ \F_{<1}(\r'''(\cdot+z), u''', \d_z u)\\
& \leq | u'''|_2 |\r'''|_2 |u|_\infty + \int H_\f(\r''')(x) u'''(x) u'(x) dx \\
&+ \int_0^1 (1-\th) \iint \r'''(x+z) u'''(x) u''(x+\th z) \f(z) dz \ dx \ d\th\\
&\leq | u'''|_2 |\r'''|_2 (|u|_\infty + |u'|_\infty + |u''|_\infty).
\end{split}
\end{equation*}

\begin{equation*}\label{}
\begin{split}
\aT(\r'',u')u''' & = \F( \r''(\cdot +z), u''', \d_z u')= \F_{>1}( \r''(\cdot +z), u''', \d_z u')+ \int H_\f(\r'')(x) u'''(x) u''(x) dx \\
& + \int_0^1 (1-\th) \iint \r''(x+z) u'''(x) u'''(x+\th z) \f(z)\ dz\ dx \ d\th\\
& \leq | u'''|_2 |\r''|_2 |u'|_\infty + |u'''|_2|\r''|_2|u''|_\infty + | u'''|_2^2 |\r''|_\infty \\
& = | u'''|_2^2 |\r''|_\infty + | u'''|_2 |\r''|_2 (|u'|_\infty + |u''|_\infty).
\end{split}
\end{equation*}
And the last term requires more preparation,
\begin{equation*}\label{}
\begin{split}
\aT(\r',u'')u''' & = \iint \r'(y) u'''(x) (u''(y) - u''(x)) \frac{dy\ dx}{|x-y|^2} \\
&= \frac12 \iint (\r'(y) u'''(x) -\r'(x) u'''(y)) (u''(y) - u''(x)) \frac{dy\ dx}{|x-y|^2} \\
& = \frac12 \iint (\r'(y)-\r'(x)) u'''(x) (u''(y) - u''(x)) \frac{dy\ dx}{|x-y|^2} \\
&+ \frac12 \iint \r'(x) (u'''(x)-u'''(y))(u''(y) - u''(x)) \frac{dy\ dx}{|x-y|^2}\\
& = \frac12 \F( \d_z\r', u''', \d_z u'') - \frac12 \F(\r', \d_z u''',\d_z u'')\\
&= \frac12 \F_{>1}( \d_z\r', u''', \d_z u'') +
\frac12 \iint \f(z) \r''(x) u'''(x) \d_z u''(x) \frac{dz}{z}\ dx \\
&+ \frac12 \int_0^1 (1-\th) \iint \f(z) \r'''(x+\th z) u'''(x) \d_z u''(x) dz\ dx \ d\th \\
& - \frac14 \iint \r'(x)((\d_z u''(x))^2)' \frac{dz}{|z|^2}\ dx\\
& \leq |\r'|_\infty |u'''|_2 |u''|_2 + |\r''|_\infty |u'''|_2 |u''|_2 + |\r'''|_2 |u'''|_2 |u''|_\infty + \frac14 \iint \r''(x)(\d_z u''(x))^2 \frac{dz}{|z|^2}\ dx. 
\end{split}
\end{equation*}
The latter integral is bounded by $|\r''|_\infty |u''|_{\dot{H}^{1/2}}^2 \leq |\r''|_\infty |u'''|_2 |u''|_\infty$.
Putting the obtained estimates together we obtain
\begin{equation}\label{}
\begin{split}
\p_t |u'''|_2^2 &\leq C |u'''|_2^2(|u'|_\infty+|\r|_\infty + |\r'|_\infty+ |\r''|_\infty) \\
&+
| u'''|_2 (|\r''|_2+|\r'''|_2+|\r''|_\infty) (|u|_\infty + |u'|_\infty + |u''|_\infty) \\
&+ | u'''|_2|u''|_2(|\r'|_\infty+|\r''|_\infty).
\end{split}
\end{equation}
Finally, by Sobolev embedding, $|u''|_2 + |u|_\infty + |u'|_\infty + |u''|_\infty \leq C( |u'''|_2 + |u|_2)$, and $|\r|_\infty + |\r'|_\infty+ |\r''|_\infty + |\r''|_2 \leq C(|\r'''|_2 + |\r|_2)$ which results in the bound 
\begin{equation}\label{e:locu'''}
\p_t |u'''|_2^2 \lesssim (|u'''|_2 + |u|_2)^2 (|\r'''|_2 + |\r|_2 )  +(|u'''|_2 + |u|_2)^3.
\end{equation}
To control the energy $|u|_2$ we avoid using the natural balance relation \eqref{e:en}. Instead we test \eqref{e:main} directly with $u$. Performing much the same estimates as above we obtain, for example, 
\[
\p_t |u|^2_2 \leq |u|_\infty|u|_2 |\r|_2 + |\r'|_2|u'|_2 |u|_\infty.
\]
Putting this together with \eqref{e:locu'''} we obtain the Riccati equation for the $H^3$-norm:
\begin{equation}\label{}
\p_t |u|_{H^{3}} \leq |u|_{H^{3}}|\r|_{H^3} + |u|_{H^{3}}^2.
\end{equation}
In order to close the estimates we now have to find a similar bound on the $H^3$-norm of $\r$. This cannot be done directly by manipulating with the density transport equation. Instead we will make use of the transport of the first order quantity $e$, in terms of which we will provide the final estimates. Let us note the inequality
\[
|\r|_{H^3} \leq |u|_{H^3} + |e|_{H^2} + |\r|_2.
\]
Thus,
\begin{equation}\label{}
\p_t |u|_{H^{3}} \leq |u|_{H^{3}}(|e|_{H^2} +|\r|_2) + |u|_{H^{3}}^2.
\end{equation}
From Lemma~\ref{l:e} we have the bound on $|e|_{H^2}$:
\begin{equation}\label{e:eH3}
\p_t |e|_{H^2} \leq C (|e|_{H^2} + |u|_{H^3})^2.
\end{equation}
And the similar bound holds for $|\r|_2$. We have obtained the classical Riccati equation for the quantity $Y = |u|_{H^3} + |e|_{H^2} + |\r|_2$:
\[
Y_t \leq C Y^2.
\]
Note that $Y \sim |u|_{H^3} + |\r|_{H^3}$, hence we have proved necessary a priori bound for the local well-posedness in $H^3$.

\subsection{Control over $|u_x|_\infty$ and $|\rho_x|_\infty$ on intervals of regularity} Suppose that we have a classical solution $(u,\rho) \in C([0,T); H^{3})$ as proved to exist in the previous section. We now seek to establish a uniform bound on $|u_x|_\infty$ and $|\rho_x|_\infty$ on the entire interval $[0,T)$. First, let us recall that we have already established a priori uniform bounds of $e,\rho$ and $u$ in terms of the  finite initial quantities $e_\pm,\rho_\pm$ and $u_\pm$, consult \eqref{e:eru}.
Next, as we noted the density $\rho$ satisfies a parabolic form  of the density equation:
\begin{equation}
\rho_t+u\rho_x+e\rho= \rho \aL(\rho)
\end{equation}
Similarly, one can write the equation for the momentum $m = \rho u$:
\begin{equation}
m_t+u m_x+e m  = \rho \aL(m)
\end{equation}
Note that in both cases the drift $u$ and the forcing $e \rho$ or $e m$ are bounded a priori. Moreover, the diffusion operator has kernel
\[
K(x,h,t) = \rho(x) \frac{1}{|h|^2}
\]
which satisfies all the assumptions of Schwab and Silverstre \cite{SS2012}. A direct application of \cite{SS2012} tells us that there exists an $\g>0$ such that
\begin{equation}\label{e:gamma}
\begin{split}
| \rho|_{C^\g(\T \times [T/2,T))} &\leq C( |\r|_{L^\infty(0,T)} + |\rho e |_{L^\infty(0,T)})\\
| m |_{C^\g(\T \times [T/2,T))} &\leq C( |m|_{L^\infty(0,T)} + |me|_{L^\infty(0,T)}) \\
| u |_{C^\g(\T \times [T/2,T))} &\leq C( |u|_{L^\infty(0,T)}, |\rho|_{L^\infty(0,T)}) ,
\end{split}
\end{equation}
where the latter follows from the first two since $\rho$ is bounded below.
Of course, since $u,\rho$ are in $H^3$ on $[0,T)$ this implies $C^\g$-bound on the entire interval of regularity, however we need the bound to be independent of $H^3$, which may blow up, in the second half of it. It is also interesting to note that the original equation for $u$ has a kernel $K(x,h,t) = \rho(x+h)\frac{1}{|h|^2}$ not even with respect to $h$, so no known results on regularization are directly applicable to the $u$-equation.
\begin{remark} In regard to higher order regularization via Schauder, we make the following observation.  For $Q=e/\r$ we recall that  $Q_x$ was shown to be under control (note that this still doesn't imply that either $e_x$ or $\r_x$ are under control). Hence, trivially, $|Q|_{C^\g}$ remains bounded at all times.  Denote
\[
\d_h Q(x) = \frac{Q(x+h) - Q(x)}{|h|^\g},
\]
and note
\[
\d_h Q(x) = \frac{\d_h e(x)}{\r(x+h)} + \frac{e(x) \d_h \r(x)}{\r(x+h) \r(x)}.
\]
Since $\rho$ is $C^\g$ and bounded away from zero  this implies that $e\in C^\g$ with $|e(t)|_{C^\g} \leq C/t^\g$. With this in mind, we now have the momentum equation in the form
\[
m_t + b(x) m_x + a(x) \L m = F,
\]
where the drift $b$, the coefficient function $a$ and the source $F$ are all in $C^\g$. As of this writing there has been no known Schauder-type bounds proved for an equation in such generality despite many recent developments in that cover partial cases, see \cite{CK2015,DZ2015,SS2012,TJ2015}. The question presents an independent interest and we will address it in subsequent work.
\end{remark}

Let us now establish control over $\rho'$.  We write 
\[
\p_t \rho' + u \rho'' + u' \rho' + e'\rho + e \rho' = - \rho' \L \rho - \rho \L \rho'.
\]
Using again $u' = e + \L \rho$ we rewrite
\[
\p_t \rho' + u \rho'' + e'\rho + 2e \rho' = - 2\rho' \L \rho - \rho \L \rho'.
\]
Let us evaluate it at the maximum of $\rho'$ and multiply by $\rho'$ again (we use the classical Rademacher theorem here to justify the time derivative):
\begin{equation}\label{e:rho-dera}
\p_t |\rho'|^2 + e' \rho \rho' + 2 e |\rho'|^2 = -2 |\rho'|^2 \L \rho - \rho \rho' \L \rho'.
\end{equation}
In view of \eqref{e:eru} and \eqref{e:high1} we can bound
\[
|e' \rho \rho' + 2 e |\rho'|^2| \leq C |\rho'|^2.
\]
Next, using the nonlinear bounds from \cite{CV2012} we have 
\begin{equation}\label{e:disslower}
\rho \rho' \L \rho' \geq \frac14 \urho D\rho'(x) + c \frac{\urho}{\orho} |\rho'|_\infty^3 \geq c_1 D\rho'(x) + c_2 |\rho'|_\infty^3.
\end{equation}
where 
\[
D\rho'(x) = \int_\R  \frac{|\rho'(x) - \rho'(x+z)|^2}{|z|^2} dz.
\]
Using smooth decompositions of the underlying $\R$ in all of the below we have 
\[
\begin{split}
\L \rho(x)  = H \rho' &= \int_{|z| <r} \frac{\rho'(x+z) - \rho'(x)}{z} dz - \int_{r<|z| <2\pi} \frac{\rho(x+z) - \rho(x)}{|z|^2} dz \\
&-\int_{2\pi <|z|} \frac{\rho(x+z) - \rho(x)}{|z|^2} dz .
\end{split}
\]
The latter is clearly bounded by a constant $c_3$ depending only on $\urho$, which in \eqref{e:rho-dera} results simply in the bound $c_3 |\rho'|_\infty^2$.
The first is bounded, via H\"older, by 
\[
 |\rho'|_\infty^2 \sqrt{r} D^{1/2}\rho' (x) \leq \frac12 c_1D\rho'(x) +  c_4 r  |\rho'|_\infty^4.
\]
Note that this  term gets absorbed by the dissipation  \eqref{e:disslower} entirely if
\[
r = \frac{c_2}{4 c_4 |\rho'|_\infty}.
\]
The integral in the middle is bounded by, using $C^\g$-regularity,
\[
 |\rho'|_\infty^2 |\r|_{C^\g} / r^{1-\g} = c_5  |\rho'|_\infty^{3-\g} \leq c_6 + \frac{c_2}{4}  |\rho'|_\infty^3,
\]
where the cubic term again is absorbed by the dissipation. Putting the estimates together we obtain
\begin{equation}\label{e:rho-derb}
\p_t |\rho'|^2 \leq c_6 + c_3 |\rho'|^2 - c_7 D \rho'(x),
\end{equation}
which establishes the claimed control of $\rho'$. We intentionally keep the dissipation  term as it still will be used on the next step to absorb other terms. 

Now we can do the same for the momentum derivative $m_x$. Clearly it is sufficient to finish the proof for $u_x$ as well.  Note that the equation for momentum is similar, so we will skip details that are similar.  We have
\[
\p_t m' + u m'' + u' m' + e'm + e m' = - \rho' \L m - \rho \L m'.
\]
Evaluating at maximum, multiplying by $m'$, and using bounds on $e,e'$ we have
\begin{equation}\label{e:mprime}
\p_t |m'|^2 \leq c_8 ( |m'|^2_\infty + |\rho'|_\infty) + |m'|^2 |\L \rho|   + |\rho'| |m'| |\L m| -  c_9 Dm'(x) - c_{10} |m'|_\infty^3.
\end{equation}
As to $ |\rho'| |m'| |\L m|$ we proceed as before, loosing $\rho'$ in view of already established control over it. We obtain the bound simply by taking $r = 1$:
\[
c_{11}  |m'| +  |m'| D^{1/2}m' (x) \leq c_{11} |m'| + c_{12} |m'|^2 + \frac{c_9}{4}Dm'(x),
\]
with the latter being absorbed again in the dissipation. As to the term $ |m'|^2 |\L \rho| $ we still proceed as before, however in the mid-range integral $r<|z|<2\pi$ we use the full force of the obtained bound on $\rho'$. This results in logarithmic optimization bound
\[
|m'|^2 |\L \rho| \leq  c_{13} |m'|^2(1 + \ln r + \sqrt{r} D^{1/2} \rho'(x) ).
\]
Ignoring the trivial quadratic term $|m'|^2$ we have
\[
c_{13} |m'|^2 \ln r + c_{13} |m'|^2  \sqrt{r} D^{1/2} \rho'(x) \leq c_{13} |m'|^2 \ln r + c_{14}|m'|^4 r +  \frac{c_7}{2} D \rho'(x).
\]
Notice that the latter will be absorbed by the dissipation term in \eqref{e:rho-derb} when we add the two equations together. Choosing 
\[
r = \frac{c_{10}}{2 c_{14} |m'|},
\]
we obtain for the $\ln r$ and $r$-terms above the bound
\[
c_{15} |m'|^2 \ln |m'| + \frac{c_{10}}{2} |m'|^3,
\]
with the latter being absorbed into the cubic term in \eqref{e:mprime}. Altogether we have
\begin{equation}\label{e:mprime2}
\p_t |m'|^2 \leq c_{16} |m'|^2 (1+ \ln_+ |m'|) +\frac{c_7}{2} D \rho'(x).
\end{equation}
We now have to add the two equations \eqref{e:mprime2} and \eqref{e:rho-derb} together to absorb the residual $D\rho'$-term and obtain the final bound
\begin{equation}\label{eq:double}
\p_t (|m'|^2 + |\rho'|^2) \leq c_{17}  (|m'|^2 + |\rho'|^2) (1+ \ln_+ (|m'|^2 + |\rho'|^2)).
\end{equation}
This implies double-exponential, but finite,  bound on the given interval. This also finishes the proof.

\subsection{Control over $H^2$ via $|u_x|_\infty$}\label{s:H2} 
In this section we will establish an estimate on the $H^2$-norm of the solution 
\[
X = |u''|^2_2 +|\r''|_2^2 \sim   |u''|^2_2 + |e'|_2^2
\]
in terms of $|u_x|_\infty$ is a manner similar to the Beale-Kato-Majda criterion. 
Namely, we will prove
\begin{equation}\label{e:log}
X' \leq C(1 + |u'|_\infty) X (1+ \log_+ X).
\end{equation}
Given the result of the previous section this establishes uniform bound in $H^2$ on the interval of existence $[0,T)$ of an $H^3$-solution.
The equation for $u''$ reads
\[
u''_t + u u''_x + 3u' u''  = \aT(\r'',u)+2 \aT(\r',u') + \aT(\r,u'').
\]
Testing with $u''$ the local terms , after integration by parts , become bounded by $X |u'|_\infty$ trivially. We now look into key estimates for the right hand side. We will start with what proved to be the most involved term in the previous section. We skip the standard symmetrization and addition of cross-product terms in the calculations below and typically display the final representations. We have
\begin{equation}\label{e:Tuu}
\iint \aT(\r', u') u'' dx dy = \F(\d_z \r', \d_z u' , u'') + \F(\r', \d_z u', \d_z u'').
\end{equation}
For the second term we have $\d_z u' \d_z u'' = \frac12 ( (\d_z u')^2)_x$. So, switching the derivative onto $\r'$ we obtain
\[
\F (\r', \d_z u', \d_z u'') = - \frac12 \F(\r'', \d_z u',\d_z u').
\]
Now, we bound small and large scale parts as follows
\[
 | \F_{>r} (\r'', \d_z u',\d_z u') |  \leq \frac{1}{r} | \r''|_2 |u'|_4^2,
 \]
and
\[
  | \F_{< r} (\r'', \d_z u',\d_z u') |  \leq  \sqrt{r} | \r''|_2 |u' |_{W^{3/4,4}}^2,
\]
where in the latter we used the H\"older and Gagliardo-Sobolevskii definition of $W^{3/4,4}$ space.  Optimizing over $r$ we obtain
\[
| \F (\r', \d_z u', \d_z u'')  | \leq | \r'' |_2 |u'|_4^{2/3} |u'|_{W^{3/4,4}}^{4/3},
\]
and by Gagliardo-Nirenberg,
\[
 |u'|_{W^{3/4,4}} \leq |u ''|_{H^{1/2}}^{1/2} |u'|_\infty^{1/2},
\]
and interpolation we obtain
\[
| \F (\r', \d_z u', \d_z u'')  | \leq  | \r'' |_2   |u'|_4^{2/3} |u' |_\infty^{2/3} |u'' |_{H^{1/2}}^{2/3} \leq  \frac{1}{\e}  | \r'' |_2^{3/2}   |u'|_4 |u' |_\infty + \e |u'' |_{H^{1/2}}^{2} .
\]
With $\e <\urho/2$ the last term is absorbed by the dissipation. Finally, by Gagliardo-Nirenberg we have
\begin{equation}\label{e:GN2}
|u' |_4 \leq |u''|_2^{1/2} |u|_\infty^{1/2}.
\end{equation}
Recalling that $|u|_\infty$ is under control by the maximum principle, we finally obtain
\[
| \F (\r', \d_z u', \d_z u'')  | \leq  C  | \r'' |_2^{3/2} |u''|_2^{1/2}|u' |_\infty+ \e |u'' |_{H^{1/2}}^{2} \leq  C |u' |_\infty X + \e |u'' |_{H^{1/2}}.
\]
For the other term $\F(\d_z \r', \d_z u' , u'') $ the splitting is necessary but optimization is not. We have, in view of \eqref{e:GN2},
\[
\F_{>1} (\d_z \r', \d_z u' , u'')  \leq |\r'|_4|u'|_4 |u''|_2 \lesssim |\r''|_2^{1/2} |u''|_2^{3/2} \leq X.
\]
As to $\F_{<1}$, we write $\d_z u'(x) = \d_z u'(x) - z u'(x) + zu'(x)$, and note that $|\d_z u'(x) - z u'(x)| \leq |z|^2 |u''(x+\th z)|$, for some $\th$, which is unimportant. So, we have
\[
|\F_{<1} (\d_z \r', \d_z u' , u'')| \leq \left| \int u'(x) H\r'(x) u''(x) dx \right| + |\r'|_\infty |u''|_2^2 \leq ( |\r'|_\infty + |u'|_\infty) X.
\]
Note that 
\[
 |H \r'|_\infty \leq |e|_\infty + |u'|_\infty,
\]
and by the  log-Sobolev inequality,
\[
 |\r'|_\infty \leq  |H \r'|_\infty(1+ \log_+ |\r''|_2)\leq (C + |u'|_\infty) (1+ \log_+ X).
 \]
So,  
\[
|\F_{<1} (\d_z \r', \d_z u' , u'')| \leq  (C + |u'|_\infty) X (1+ \log_+ X).
\]
We have proved the bound
\[
| \aT(\r', u') u'' | \leq  (C + |u'|_\infty) X (1+ \log_+ X) +  \e |u'' |_{H^{1/2}}.
\]

Next, let us bound the dissipation term
\[
\iint \aT(\r,u'') u'' \, dy dx =  - \F(\r , \d_z u'', \d_z u'') + \F(\d_z \r, \d_z u'', u'').
\]
Obviously,
\[
 \F(\r , \d_z u'', \d_z u'') \geq \urho |u''|_{H^{1/2}}^2.
\]
As to  $\F(\d_z \r, \d_z u'', u'')$ we have
\[
\F_{>r}(\d_z \r, \d_z u'', u'') \leq \frac{1}{r} |u''|_2^2,
\]
and 
\[
\F_{<r}(\d_z \r, \d_z u'', u'') \leq  |\r'|_\infty \int_\R |u''(x) | \int_{|z|<2r}\left|  \frac{\d_z u''}{z} \right| dz dx \leq  \sqrt{r} |\r'|_\infty |u''|_2 |u''|_{H^{1/2}}.
\]
Optimizing we obtain
\[
\begin{split}
 \F(\d_z \r, \d_z u'', u'') & \leq |\r'|_\infty^{2/3} |u''|_2^{4/3} |u''|_{H^{1/2}}^{2/3} \leq \e |u''|_{H^{1/2}}^{2}  + \frac{1}{\e} |\r'|_\infty |u''|_2^{2} \\
 &\leq  \e |u''|_{H^{1/2}}^{2}  + (C + |u'|_\infty) X (1+ \log_+ X),
 \end{split}
 \]
 which closes the estimates with the help of dissipation.

It remains to estimate the last term. By switching $x$ and $y$ we obtain
\begin{equation}\label{e:star}
\begin{split}
\iint \aT(\r'',u) u'' \, dy dx &= \iint \r''(x)(u(x)-u(y)) u''(y) \frac{dy dx}{|x-y|^2} \\
&=  \iint \r''(x)(u(x)-u(y))( u''(y) -u''(x)) \frac{dy dx}{|x-y|^2}  \\
&+  \iint \r''(x)u''(x) (u(x)-u(y)) \frac{dy dx}{|x-y|^2} \\
&= \F(\r'',\d_z u, \d_z u'') + \int \r'' u'' \L(u) dx.
\end{split}
\end{equation}
Clearly, by the log-Sobolev inequality,
\[
| \int \r'' u'' \L(u) dx | \leq |\r''|_2 |u''|_2 |\L u|_\infty \lesssim |u'|_\infty X (1+ \log_+ X).
\]
For the F-term we have
\[
| \F_{>r} (\r'',\d_z u, \d_z u'') | \leq \frac{1}{r} |\r''|_2 |u''|_2,
\]
while 
\[
| \F_{<r} (\r'',\d_z u, \d_z u'') | \leq |u'|_\infty \int |\r''(x)| \int_{|z|<2r} \frac{|\d_z u''(x)|}{|z|} dz dx \leq |u'|_\infty \sqrt{r} | \r''|_2 | u'' |_{H^{1/2}}.
\]
Optimizing, we get
\[
|\F (\r'',\d_z u, \d_z u'')| \leq  | \r''|_2 |u''|_2^{1/3} |u'|_\infty^{2/3} |u''|_{H^{1/2}}^{2/3} \leq \e |u''|_{H^{1/2}}^2 + \frac{1}{\e}  | \r''|_2^{3/2} |u''|_2^{1/2} |u'|_\infty \leq  |u'|_\infty X.
\]
We have proved that 
\[
\p_t |u ''|_2^2 \leq  - \e |u''|_{H^{1/2}}^{2}  + C(1 + |u'|_\infty) X (1+ \log_+ X).
\]
As to quantity $e$, we apply Lemma~\ref{l:e} to obtain
\[
\p_t |e'|_2^2 \leq C(1 + |u'|_\infty) X.
\]
Putting the estimates together, \eqref{e:log} follows.

\subsection{Control over $H^3$ via $H^2$ and $|u_x|_\infty$} For a given classical solution $(u,\rho) \in C([0,T); H^{3})$ we have established uniform bounds on $|u_x,\rho_x|_\infty$ and $|u,\rho|_{H^2}$ on the entire interval $[0,T)$. We now seek to establish final control over the $H^3$-norms. Note that we already have estimate \eqref{e:eH3} which with the new information readily implies
\[
\p_t |e''|_2^2 \lesssim |e''|_2^2 + |u'''|_2^2.
\]
Now we get to bounds on $|u'''|_2^2$. Not surprisingly all of the estimates mimic the already obtained sharper estimates for $H^2$ with the use of dissipation. In what follows we will indicate necessary changes and refer to appropriate places in Section~\ref{s:H2} for details. Also, we will drop from the estimates all quantities that are already known to be bounded, such as $|u,\rho|_{H^2}$, etc.  Thus, following \eqref{e:u'''} we can see that all the terms on the left hand side obey the bound by $|u'|_\infty |u'''|_2^2 \lesssim |u'''|_2^2$. We are left with the four terms on the right hand side:
\[
\aT(\r''',u)u''', \quad \aT(\r'',u')u''', \quad  \aT(\r',u'')u''', \quad \aT(\r, u''')u'''.
\]
First, the dissipation term obeys the same bound  \eqref{e:b2} where we now keep the dissipation :
\begin{equation}\label{e:b3}
\int \aT(\r,u''')u''' dx \leq  - \urho |u'''|_{H^{1/2}}^2 + C |u'''|_2^2 ( |\r|_\infty + |\r'|_\infty) \lesssim  - \urho |u'''|_{H^{1/2}}^2 +  C |u'''|_2^2.
\end{equation}
Next, the term $\aT(\r''',u)u'''$ will be estimated in the same way as \eqref{e:star} with replacements $\rho'' \to \rho'''$, $u'' \to u'''$.  We have the bound
\[
|\aT(\r''',u)u'''| \leq |\r'''|_2 |u'''|_2 |\L u|_\infty + \frac{\urho}{10} |u'''|_{H^{1/2}}^2 + \frac{10}{\urho}  | \r'''|_2^{3/2} |u'''|_2^{1/2} .
\]
Since $ |\L u|_\infty \leq |u|_{H^2} < C$ and $|\rho'''|_2 \leq |e''|_2 + |u'''|_2$ we have
\[
|\aT(\r''',u)u'''| \leq  |e''|^2_2 + |u'''|_2^2 +
\frac{\urho}{10} |u'''|_{H^{1/2}}^2.
\]
Next, the term $\aT(\r'',u')u'''$ will also be estimates as in \eqref{e:star} with a simple replacement $u \to u'$, i.e. raising the derivative of $u$ by one on every step. We obtain directly, 
\[
|\aT(\r'',u')u''' |  \leq  |\r''|_2 |u'''|_2 |\L u'|_\infty  +  \e |u'''|_{H^{1/2}}^2 + \frac{1}{\e}  | \r''|_2^{3/2} |u'''|_2^{1/2} |u''|_\infty.
\]
Dropping $|\rho''|_2$ and using that $ |\L u'|_\infty,  |u''|_\infty \leq |u|_{H^3}$, we obtain
\[
|\aT(\r'',u')u''' |  \leq \frac{\urho}{10} |u'''|_{H^{1/2}}^2 +  |u|_{H^3}^2 \leq \frac{\urho}{10} |u'''|_{H^{1/2}}^2 + C +  |u'''|_{2}^2.
\]
Finally, the term $ \aT(\r',u'')u'''$ can be estimates as term \eqref{e:Tuu} by raising the derivative of $u$ by one and with the use of boundedness of $|\rho''|_2$, $|\rho'|_\infty$. We obtain
\[
\begin{split}
| \aT(\r',u'')u'''| & \leq   \e |u''' |_{H^{1/2}}^{2} +  \frac{1}{\e}  | \r'' |_2^{3/2}   |u''|_4 |u'' |_\infty +  |\r''|_2^{1/2} |u'''|_2^{3/2}  \\
& +  \left| \int u''(x) H\r'(x) u'''(x) dx \right| + |\r'|_\infty |u'''|_2^2 .
\end{split}
\]
We have trivially, $  |u''|_4 |u'' |_\infty \leq |u|_{H^3}^2$, and
\[
 \left| \int u''(x) H\r'(x) u'''(x) dx \right|  \leq |u''|_2 |u'''|_2 |H\rho'|_\infty \lesssim  |u'''|_2 |H\rho''|_2 \lesssim |u'''|_2.
 \]
This completes the estimate for the $H^3$-norm $Y$:  $Y' \leq C Y$ on the time interval of existence. This completes the proof.


\end{document}